\newcommand{\suchthat}{\;\ifnum\currentgrouptype=16 \middle\fi|\;}
\numberwithin{equation}{section}
\newtheorem{theorem}{Theorem}[section]
\newtheorem{definition}[theorem]{Definition}
\newtheorem{proposition}[theorem]{Proposition}
\newtheorem{lemma}[theorem]{Lemma}
\newtheorem{corollary}[theorem]{Corollary}
\newtheorem{example}[theorem]{Example}
\newtheorem{remark}[theorem]{Remark}
\newcommand{\BN}{\mathbb{N}}
\newcommand{\BC}{\mathbb{C}}
\newcommand{\cS}{\mathcal{S}}
\newcommand{\cG}{\mathcal{G}}		
\newcommand{\cI}{\mathcal{I}}		
\newcommand{\cH}{\mathcal{H}}
\newcommand{\cT}{\mathcal{T}}
\newcommand{\fG}{ \mathfrak{G}}
\newcommand{\ov}{\overline}
\newcommand{\Id}{\mbox{Id}}
\newcommand{\e}{\mathbf{e}}
\newcommand{\z}{\mathbf{z}}
\newcommand{\w}{\mathbf{w}}
\newcommand{\ww}{\omega}
\newcommand{\bnu}{\boldsymbol\nu}
\newcommand{\bmu}{\boldsymbol\mu}
\newcommand{\bta}{\boldsymbol\eta}
\newcommand{\bgamma}{\boldsymbol\gamma}
\newcommand{\llangle}{\langle \kern -0.2em \langle}	
\newcommand{\rrangle}{\rangle \kern -0.2em \rangle}
\newcommand{\inner}[1]{\left\langle  #1 \right\rangle }
\newcommand{\Span}{\mbox{\rm Span }}
\newcommand{\od}{\mbox{\rm ~(mod 3) }}
\newcommand{\soul}[1]{#1_r}
\newcommand{\comp}[1]{\overline{\cG}_3^{(#1)}}
\begin{document}
\title[Generalized Grassmann Algebras]{Generalized Grassmann Algebras and Applications to Stochastic Processes}

\author[D. Alpay, P. Cerejeiras, U. K\"ahler]{D. Alpay$^\dagger$, P. Cerejeiras$^\ddagger$, U. K\"ahler$^\ddagger$}
\address{$^\dagger$ Schmid College of Science and Technology, \newline Chapman University \newline One University Drive \newline Orange, California 92866, USA \vspace{2mm} \newline $^\ddagger$ CIDMA - Center for Research and Development in Mathematics and Applications, \newline Department of Mathematics, University of Aveiro \newline Campus Universit\'ario de Santiago \newline 3810-193 Aveiro, Portugal.}

\subjclass[2010]{Primary: 30G35; Secondary: 60H45, 60G22, 15A75.}

\date{\today}

\keywords{Ternary Grassmann algebra, Fock space, stochastic process, hypersymmetry}

\begin{abstract}
In this paper we present the groundwork for an It\^o/Malliavin stochastic calculus and Hida's white noise analysis in the context of a supersymmentry with $\mathbb{Z}_3$-graded algebras. To this end we establish a ternary Fock space and the corresponding strong algebra of stochastic distributions and present its application in the study of stochastic processes in this context.
\end{abstract}

\maketitle

\tableofcontents

\section{Introduction}

Classic theories like Bose-Einstein or Fermi-Dirac statistics are based on SU(2)-symmetries and $\mathbb{Z}_2$-graded algebras. But theories like Quantum Chromodynamics where quarks are considered as fermions require a setting with $\mathbb{Z}_3$- (or $\mathbb{Z}_6$-)graded algebras for a convenient generalisations of Pauli's exclusion principle and the establishment of the corresponding statistics~\cite{Kerner2018}. Such algebras and the corresponding Dirac operators have been studied in the recent past. But these theories raise an additional question about the necessary extension of the corresponding supersymmetry. While standard supersymmetry combines bosonic fields with fermionic fields to a $\mathbb{Z}_2$-graded Lie super algebra a super-symmetric extension involving quarks requires a $\mathbb{Z}_3$-graded algebra of Grassmannian type which leads to a kind of hypersymmetry~\cite{Abramov1997}.

One important aspect of supersymmetry lies in its combination with stochastic dynamics, also called topological supersymmetry initiated by the seminal works of Parisi and Sourlas in 1979 (see e.g. \cite{PaSo1979} or \cite{Junk1996}), in particular the more recent supersymmetric theory of stochastic dynamics~\cite{Ovch11,Ovch16}. But for the establishment of such a theory in the context of $\mathbb{Z}_3$- (or $\mathbb{Z}_6$-)graded algebras, i.e. the development of topological hypersymmetry, a major problem arises. One needs the counterpart of the classic It\^o/Malliavin stochastic calculus and Hida's white noise analysis in this context. This is the problem we are going to study in this paper. While differentiation and integration has been studied in superspace using Clifford algebras~\cite{Ali1,Ali2} no such study has been made in the context of $\mathbb{Z}_3$-graded algebras to the knowledge of the authors.

To this end let us recall the classic (finite or infinite) Grassmann algebra. $\cG_d$ is the unital algebra over the complex numbers generated by $\e_0=1$ and a finite set of elements $\e_i$, $i\in\{1,2,\hdots,d\}$, which do not belong to and are linearly independent over $\mathbb{C}$. Moreover, they satisfy
\begin{equation}
\e_i \e_j + \e_j \e_i=0, \label{anticommutative}
\end{equation}
where $i,j \in\{1,2,\hdots,d\}$, and in particular
\begin{equation}
\e_i^2 = 0. \label{square-0}
\end{equation}
An element of $\cG_d$ is often referred to as a supernumber. The number of generators can be taken to be infinite. To do so, closures of the algebra with respect to a norm are considered -- see, e.g., \cite{rogers1980global, zbMATH00861741, alpay2018distribution, alpay2018positivity}.

It is well-known that the classic Grassmann algebra is $\mathbb{Z}_2$-graded. To introduce a $\mathbb{Z}_3$-grading we present a generalization of the classic Grassmann algebra to a ternary Grassmanian setting. We consider once again the real generator $\e_0=1$ and basis elements $\e_1,\hdots,\e_d$. Instead of \eqref{anticommutative}, we assume the generators satisfy
\begin{equation} \label{Eq.001}
\e_i \e_j = \omega \e_j \e_i
\end{equation}
for every $i<j$, where $i,j\in\{1,2,\hdots,d\}$ and where $\omega\neq 1$ is a (fixed) non-zero complex number. As will be clear from the next section these algebras have a natural $\mathbb{Z}_3$-grading.

Using these algebras we are going to create the necessary algebraic and analytic tools for the establishment of the counterpart of Hida's white noise space theory, including the construction of a topological algebra associated with the above mentioned algebra based on a decreasing family of Hilbert spaces which allows us to obtain the link between this algebra and the Fock space. We will finish by showing the application to stochastic processes.

\section{Finite Ternary Grassmann Algebras}
\label{sec-ternary}
Let us start by introducing ternary Grassmann algebras. Let $\fG= \{ \e_j, j \in \BN \}$\ be a countable set of linearly independent vectors over $\BC.$ We denote by  $V_d$ the complex linear space generated by the first $d$ of such vectors $\e_1, \cdots, \e_d$.

\begin{definition}\label{def2:001} We define the \textit{ternary Grassmann algebra} $\cG_{3, d}$ associated to $V_d$ as the free (non-trivial) algebra over $\BC$ containing a copy of $\Span\{ \e_1, \ldots, \e_d \}$ and of $\BC$, and satisfying to the following relations:
\begin{enumerate}[(i)]
\item there exists a complex $\omega \not=0$ such that
\begin{equation}\label{Eq:2.001}
\e_i\e_j=\omega \e_j\e_i, \qquad \mbox{\rm  for all }i<j;
\end{equation}
\item the basis elements satisfy
\begin{equation}\label{Eq:2.002}
\mathcal{T}(\e_i, \e_j, \e_k) =0, \qquad \mbox{\rm  for all } i, j, k =1, \ldots, d,
\end{equation}
\end{enumerate} where $\cT$ denotes the ternary form (based on the anti-commutator):
\begin{eqnarray}\label{Eq:2.003}
\mathcal{T}(\e_i, \e_j, \e_k) & = & \e_i \{ \e_j, \e_k\} + \e_j \{ \e_k, \e_i\} +\e_k \{ \e_i, \e_j\}   \nonumber \\
& = & \e_i \e_j \e_k +  \e_i \e_k \e_j + \e_j \e_i \e_k + \e_j \e_k \e_i + \e_k \e_i \e_j  + \e_k \e_j \e_i, \quad 1\leq i\leq j\leq k \leq d.
\end{eqnarray}
\end{definition}

\begin{remark} \label{Rem:2.001} By ``non-trivial'' we mean that the product of any two arbitrary basic elements of the algebra is either a new element of the algebra or it is zero. We will say that $\cG_{3, d}$ is generated by $\fG_d = \{ \e_1, \ldots, \e_d \} \subset \fG.$  Such algebras have already been studied in the literature. They are often referred to as {\it ternary Grassmann algebras}~\cite{T94, K2010}.
\end{remark}

An immediate consequence of the definition is the following lemma.

\begin{lemma}\label{lem:2.001} Under the conditions of Definition \ref{def2:001} it holds
\begin{enumerate}[(i)]
\item $\e_i^3 =0$ for all $i=1,\ldots, d;$
\item $\omega$ is a third root of the unit.
\end{enumerate}
\end{lemma}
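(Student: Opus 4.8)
The plan is to derive both assertions from the single family of relations \eqref{Eq:2.002}, by specializing the indices $i,j,k$ with suitable repetitions and then collapsing the resulting monomials by means of the commutation rule \eqref{Eq:2.001}. Neither part requires any structure beyond the two defining relations and the fact that the ground field $\BC$ has characteristic zero.

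For part (i), I would set $i=j=k$ in \eqref{Eq:2.002}. In the expansion \eqref{Eq:2.003} all six monomials collapse to $\e_i^3$, so that $\cT(\e_i,\e_i,\e_i)=6\,\e_i^3$. Since $6\neq 0$ in $\BC$, the vanishing of the ternary form forces $\e_i^3=0$. This step uses neither \eqref{Eq:2.001} nor non-triviality.

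For part (ii), I would instead specialize \eqref{Eq:2.002} to exactly two coincident indices, say $\cT(\e_i,\e_j,\e_j)=0$ with $i<j$ (the variant $\cT(\e_i,\e_i,\e_j)$ runs identically). Grouping equal monomials reduces \eqref{Eq:2.003} to $2\bigl(\e_i\e_j^2+\e_j\e_i\e_j+\e_j^2\e_i\bigr)=0$. I would then normal-order each term using \eqref{Eq:2.001} in the form $\e_j\e_i=\omega^{-1}\e_i\e_j$ (legitimate since $\omega\neq 0$): one checks directly that $\e_j\e_i\e_j=\omega^{-1}\e_i\e_j^2$ and $\e_j^2\e_i=\omega^{-2}\e_i\e_j^2$. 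Substituting, the relation becomes $(1+\omega^{-1}+\omega^{-2})\,\e_i\e_j^2=0$, equivalently $(\omega^2+\omega+1)\,\e_i\e_j^2=0$ after multiplying through by $\omega^2$.

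The decisive step is then to conclude that the \emph{scalar} coefficient, and not the monomial, is what vanishes. This is exactly where the non-triviality of $\cG_{3,d}$ from Remark \ref{Rem:2.001} must be invoked: the element $\e_i\e_j^2$ is a genuine, nonzero basic element of the algebra, not one annihilated by the defining relations, so we may cancel it to obtain $\omega^2+\omega+1=0$. This immediately gives $\omega\neq 1$ and, from $\omega^3-1=(\omega-1)(\omega^2+\omega+1)=0$, the conclusion $\omega^3=1$; in fact $\omega$ is a primitive cube root of unity. I expect this final point -- guaranteeing $\e_i\e_j^2\neq 0$ so that the factor $\omega^2+\omega+1$ is forced to be zero -- to be the only genuine obstacle, since it is precisely the place where one must argue that imposing the relations does not collapse the algebra; everything preceding it is routine bookkeeping with \eqref{Eq:2.001}.
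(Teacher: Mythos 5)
Your proposal is correct and follows essentially the same route as the paper: part (i) from $\cT(\e_i,\e_i,\e_i)=6\,\e_i^3=0$ (which the paper dismisses as ``immediate''), and part (ii) by specializing the ternary form to two coincident indices, normal-ordering via \eqref{Eq:2.001}, and invoking non-triviality of the monomial to extract $\omega^2+\omega+1=0$. The paper uses $\cT(\e_i,\e_i,\e_j)$ rather than your $\cT(\e_i,\e_j,\e_j)$, but as you note this is an identical computation, and your identification of non-triviality as the decisive step is exactly where the paper invokes it.
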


\begin{proof} Proposition $(i)$ is immediate. For the second proposition we have for  $1\leq i < j  \leq d$
\begin{gather*}
0 = \cT(\e_i, \e_i, \e_j) = \e_i \e_i \e_j +  \e_i \e_j \e_i + \e_i \e_i \e_j + \e_i \e_j \e_i + \e_j \e_i \e_i  + \e_j \e_i \e_i \\
= 2 (\e_i^2 \e_j + \e_i \e_j \e_i + \e_j \e_i^2 )  = 2(1 + \omega +\omega^{2}) \e_i^2 \e_j.
\end{gather*} Under the assumption of non-triviality (i.e. $\e_j \e_i^2 \not=0 $) we obtain $\omega^2 + \omega + 1 =0$ and therefore,  $\omega$ is a root of third order the unity.
\end{proof}

\begin{remark} \label{Rem:2.002} In what follows we assume $\omega=e^{i2\pi/3}.$
\end{remark}

Due to Lemma \ref{lem:2.001} one observes that the relevant powers of the vector basis elements are given by $\e_j^m, m=0, 1, 2,$ since  $\e_j^m =0$ for all $m\geq 3$. Furthermore, we identify $\e_j^0 = 1,$ the identity of the field $\BC.$ Hence, we have
\begin{equation} \label{Eq:2:004}
\e_j^{m} = \left\{ \begin{array}{ll}
1, & m=0 \\
\e_j, & m=1 \\
\e_j^2, & m=2 \\
0, & m\geq 3
\end{array}        \right.
\end{equation} for all $j=1, \ldots, d.$ In consequence, a basis for the finite ternary Grassmann algebra $\cG_{3,d}$ is expressible in terms of appropriated ordered $d-$tuples of powers less than $3,$ that is,
\begin{equation} \label{Eq:2:005}
\e^{\bnu} := \e_1^{\nu_1} \cdots \e_d^{\nu_d}, \qquad \bnu = (\nu_1, \ldots, \nu_d) \in \{ 0, 1, 2 \}^d.
\end{equation} %%%%NOVO
Under the previous convention we have $\e^{\boldsymbol{0}} =1$ with $\boldsymbol{0}=(0, \ldots, 0 ).$ We shall denote by $\mathcal I_d$ the set of all such $d-$tuples, that is,
\begin{equation} \label{Eq:2:005a}
\mathcal I_d = \{ 0, 1, 2 \}^d.
\end{equation}
Notice that we have $\e^{\bnu} = \e^{\bmu}$ if and only if $\nu_j = \mu_j$ for all $j=1, \ldots, d.$ %Also, we identify $\e^{\boldsymbol{0}} =1,$ where $\boldsymbol{0}=(\underbrace{0, \cdots, 0}_{d~times}).$

Since there are $3^d$ such basis elements every  $z \in \cG_{3,d}$ can be written as
\begin{equation} \label{Eq:2:006}
\z = \sum_{|\bnu| =0}^{2d} z_{\bnu} \e^{\bnu}, \qquad z_{\bnu} \in \BC, ~|\bnu| = \nu_1 + \cdots +\nu_d.
\end{equation}
We observe that $\cG_{3,d}$ is a $Z_3$-graded algebra, that is, under multiplication the grades add up modulus $3$. This leads to the following blade decomposition of the generalised ternary Grassmann algebra:
\begin{equation} \label{Eq:2:007a}
\cG_{3,d} = [\cG_{3,d}]_0 \oplus [\cG_{3,d}]_1 \oplus \cdots \oplus [\cG_{3,d}]_{2d}
\end{equation}
where  each $k-$blade is
\begin{equation} \label{Eq:2:007b}
[\cG_{3,d}]_k = \left\{ \z \in \cG_{3,d} : \z = \sum_{|\bnu| =k} z_{\bnu} \e^{\bnu} \right\}, ~k=0, \ldots, 2d.
\end{equation} In what follows we denote by $[\z]_k := \sum_{|\bnu| =k} z_{\bnu} \e^{\bnu}$ the projection of $\z$ into the blade $[\cG_{3,d}]_k, k=0, \ldots, 2d.$

\begin{example} \label{Ex2:001} For dimension $d=2$ we have
$$\z = \underbrace{z_{00}}_{\in [\cG_{3,2}]_0} + \underbrace{z_{10}\e_1 + z_{01}\e_2}_{\in [\cG_{3,2}]_1} + \underbrace{z_{20}\e_1^2 + z_{11} \e_1 \e_2 + z_{02}\e_2^2}_{\in [\cG_{3,2}]_2} + \underbrace{z_{21}\e_1^2 \e_2 + z_{12}\e_1\e_2^2}_{\in [\cG_{3,2}]_3} +\underbrace{z_{22} \e_1^2\e_2^2}_{\in [\cG_{3,2}]_4},$$
where
$$[\z]_0 = z_{00}, \quad [\z]_1 = z_{10}\e_1 + z_{01}\e_2, \quad \quad [\z]_2 = z_{20}\e_1^2 + z_{11} \e_1 \e_2 + z_{02}\e_2^2,$$
$$ [\z]_3 =z_{21}\e_1^2 \e_2 + z_{12}\e_1\e_2^2, \quad \quad [\z]_4 = z_{22} \e_1^2\e_2^2.$$
\end{example}

The generalized finite ternary Grassmann algebra $\cG_{3,d}$ decomposes itself into the sum of three spaces
\begin{equation} \label{Eq:2.026}
\cG_{3,d} = \cG_{3,d}^{0} \oplus  \cG_{3,d}^{1} \oplus  \cG_{3,d}^{2},
\end{equation}
where each $\cG_{3,d}^{k}$
%is generated (as additive linear spaces) by
$$\cG_{3,d}^{k} = {\rm span}\,\{ \e^{\bnu} : |\bnu | = k \od  \}, \quad k=0,1,2.$$

We observe that these spaces obey the following multiplication rules:
\begin{gather*}
\cG_{3,d}^{0} \cdot \cG_{3,d}^{0} \subset \cG_{3,d}^{0},\qquad  \cG_{3,d}^{1} \cdot \cG_{3,d}^{2} \subset \cG_{3,d}^{0},\qquad  \cG_{3,d}^{2} \cdot \cG_{3,d}^{1} \subset \cG_{3,d}^{0}.
\end{gather*}
Hence, only $\cG_{3,d}^{0}$ is a subalgebra of $\cG_{3,d}$ while the spaces $\cG_{3,d}^{1}, \cG_{3,d}^{2}$ \textit{do not form an algebra}.

\subsection{Properties}

We now present results on products  in $\cG_{3,d}$. Due to (\ref{Eq:2.001}), we have
\begin{equation} \label{Eq:2:008b}
\e_i\e_j=\omega \e_j\e_i \quad \Leftrightarrow \quad \e_j\e_i = \omega^2 \e_i\e_j,
\end{equation}
for all $i<j.$ %We remark that $1+\omega + \omega^2=0$ since $\omega$ is a 3rd root of the unit.

Hence, for each component we have
\begin{equation} \label{Eq:2:011}
\e_j^{\nu_j}  \e_j^{\mu_j} = \left\{ \begin{array}{ll}
\e_j^{\nu_j + \mu_j},& \mbox{if ~} 0 \leq \nu_j + \mu_j \leq 2 \\
0,& \mbox{otherwise.}
\end{array} \right., \quad j=1, \ldots, d.
\end{equation}

We now observe that for all $\bnu, \bmu \in \mathcal I_d$ we have
\begin{equation} \label{Eq:2:008a}
\e^{\bnu} \e^{\bmu} = 0,
\end{equation}
whenever $\nu_j + \mu_j \geq3$ for some $j.$ However, if $0 \leq \nu_j + \mu_j < 3$ for all $j$ then by (\ref{Eq:2:008b}) we obtain
\begin{equation} \label{Eq:2:008}
\e^{\bnu} \e^{\bmu} = \sigma(\bnu, \bmu)  ~\e^{\bnu + \bmu},
\end{equation}
where $\sigma(\bnu, \bmu) = \omega^{2 \sum_{s=1}^{d-1} \sum_{j=s+1}^d \nu_j \mu_s}$ corresponds to $\omega = \exp(\frac{2\pi i}{3})$ to the power of the number of permutations of the basis elements.
This leads to the following multiplication rule:
\begin{equation} \label{Eq:2:008c}
\e^{\bnu} \e^{\bmu} = \sigma(\bnu, \bmu) \e^{\bnu + \bmu}, \qquad \sigma(\bnu, \bmu):= \left\{ \begin{array}{ll}
0, & \mbox{\rm if } \bnu+\bmu \notin \mathcal I_d \\
\omega^{2 \sum_{s=1}^{d-1} \sum_{j=s+1}^d \nu_j \mu_s}, & \mbox{\rm otherwise}
\end{array}        \right. .
\end{equation}

\begin{lemma} \label{lem:2.003} For every $\z \in [\cG_{3,d}]_k, \w \in [\cG_{3,d}]_s,$ $(0\leq k, s\leq 2d),$ we have that $\z \w =0$ if $k+s > 2d$ and
\begin{equation} \label{Eq:2:010}
\z \w \in \{ 0 \} \oplus [\cG_{3,d}]_{k+s} ,\quad  {\rm if ~} 0\leq k+s \leq 2d.
\end{equation}
\end{lemma}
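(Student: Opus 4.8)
The plan is to reduce everything to the basis multiplication rule \eqref{Eq:2:008c} and then track how the total degree behaves under nonzero products. By the blade description \eqref{Eq:2:007b}, I would write $\z = \sum_{|\bnu|=k} z_{\bnu}\e^{\bnu}$ and $\w = \sum_{|\bmu|=s} w_{\bmu}\e^{\bmu}$ and expand the product bilinearly:
\begin{equation*}
\z\w = \sum_{|\bnu|=k}\sum_{|\bmu|=s} z_{\bnu} w_{\bmu}\, \e^{\bnu}\e^{\bmu}.
\end{equation*}
Thus it suffices to understand each single product $\e^{\bnu}\e^{\bmu}$, and for this I would invoke \eqref{Eq:2:008c} directly: either $\bnu+\bmu\notin\mathcal I_d$ and the product is $0$, or $\bnu+\bmu\in\mathcal I_d$ and $\e^{\bnu}\e^{\bmu}=\sigma(\bnu,\bmu)\,\e^{\bnu+\bmu}$.

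The key observation I would isolate is that whenever the product is nonzero, the total degrees add: since $\bnu+\bmu\in\mathcal I_d$ has entries $\nu_j+\mu_j$, its weight is $|\bnu+\bmu| = \sum_j(\nu_j+\mu_j) = |\bnu|+|\bmu| = k+s$, so $\e^{\bnu+\bmu}\in[\cG_{3,d}]_{k+s}$ by \eqref{Eq:2:007b}. This single fact settles both assertions. For the second claim ($0\le k+s\le 2d$), every surviving term lies in the blade $[\cG_{3,d}]_{k+s}$ while the vanishing terms contribute $0$, so the whole sum lies in $\{0\}\oplus[\cG_{3,d}]_{k+s}$ as required.

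For the first claim I would argue by the degree bound built into $\mathcal I_d=\{0,1,2\}^d$. If $k+s>2d$, then for any pair with $|\bnu|=k$, $|\bmu|=s$ we would need a tuple $\bnu+\bmu$ of weight $k+s>2d$; but every element of $\mathcal I_d$ has weight at most $2d$ (attained only when all entries equal $2$). Hence $\bnu+\bmu\notin\mathcal I_d$, which forces $\sigma(\bnu,\bmu)=0$ and $\e^{\bnu}\e^{\bmu}=0$ for \emph{every} pair appearing in the expansion, so $\z\w=0$.

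The argument is essentially bookkeeping, so I do not expect a genuine obstacle; the only point requiring a word of care is the elementary pigeonhole step that $k+s>2d$ forces some coordinate $\nu_j+\mu_j\ge 3$ (equivalently $\bnu+\bmu\notin\mathcal I_d$), since otherwise all coordinates would be at most $2$ and the weight could not exceed $2d$. Once that is noted, the conclusion is immediate from \eqref{Eq:2:008c}.
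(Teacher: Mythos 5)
Your proposal is correct and follows essentially the same route as the paper's proof: expand $\z\w$ bilinearly, apply the multiplication rule \eqref{Eq:2:008c}, and note that every surviving term $\e^{\bnu+\bmu}$ has weight $|\bnu|+|\bmu|=k+s$. The only difference is that you spell out the pigeonhole step (why $k+s>2d$ forces $\bnu+\bmu\notin\mathcal I_d$), which the paper leaves as ``trivially'' implied.
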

\begin{proof} Hence, we obtain
\begin{gather*}
\z \w = (\sum_{|\bnu| =k} z_{\bnu} \e^{\bnu}) (\sum_{|\bmu| =s} w_{\bmu} \e^{\bmu})= \sum_{{|\bnu| =k, |\bmu| =s}\atop { \bnu+\bmu \in \mathcal I_d}} z_{\bnu} w_{\bmu} \sigma(\bnu, \bmu) \e^{\bnu+\bmu}\\
= \sum_{{|\bnu| =k, |\bmu| =s}\atop { \bnu+\bmu \in \mathcal I_d}} \sigma(\bnu, \bmu) ~z_{\nu_1, \ldots, \nu_d} w_{\mu_1, \ldots, \mu_d} \e_1^{\nu_1+\mu_1} \cdots \e_d^{\nu_d+\mu_d}.
\end{gather*} The result follows trivially from (\ref{Eq:2:008c}). %$k+s= \nu_1+\mu_1+ \cdots  + \nu_d+\mu_d.$
 \end{proof}

\begin{lemma} \label{lem:2.004} For every vector $\z = z_1 \e_1 + \cdots + z_d \e_d \in [\cG_{3,d}]_1$ it holds
\begin{equation} \label{Eq:2:012}
\z^3 = 0.
\end{equation}
\end{lemma}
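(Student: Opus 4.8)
The plan is to expand the cube by bilinearity of the algebra product and then reorganize the resulting triple sum into copies of the ternary form $\cT$, each of which vanishes by hypothesis. Writing $\z = \sum_{i=1}^d z_i \e_i$, bilinearity gives
$$\z^3 = \sum_{i,j,k=1}^d z_i z_j z_k\, \e_i \e_j \e_k.$$
The scalar weight $z_i z_j z_k$ is invariant under permutations of the index triple $(i,j,k)$, so I would group the monomials $\e_i\e_j\e_k$ according to the underlying multiset $\{i,j,k\}$ and sum over the distinct orderings within each group.

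Then I would treat the three shapes of multiset separately. When $i,j,k$ are pairwise distinct, summing over the six orderings produces exactly the six-term expression $\cT(\e_i,\e_j,\e_k)$ of \eqref{Eq:2.003}, which is zero by \eqref{Eq:2.002}. When the multiset has the shape $\{a,a,b\}$ with $a\neq b$, the three orderings give $\e_a^2\e_b + \e_a\e_b\e_a + \e_b\e_a^2$; this is precisely $\tfrac12\,\cT(\e_a,\e_a,\e_b)$ (as computed in the proof of Lemma~\ref{lem:2.001}, where the three-term sum equals $(1+\omega+\omega^2)\e_a^2\e_b = 0$), and again vanishes. Finally the diagonal terms $i=j=k$ contribute $z_i^3\,\e_i^3$, which vanish by Lemma~\ref{lem:2.001}(i). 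Hence every group is zero and $\z^3=0$.

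A cleaner route, which I would actually prefer to write up, bypasses the case analysis. The right-hand side of \eqref{Eq:2.003} is visibly trilinear in its three arguments (it is built from the bilinear product and the bilinear anticommutator) and symmetric under permuting them. Extending $\cT$ multilinearly from the basis to all of $[\cG_{3,d}]_1$, condition \eqref{Eq:2.002} forces $\cT$ to vanish on every triple of basis vectors and hence, by trilinearity, on every triple of vectors; in particular $\cT(\z,\z,\z)=0$. On the other hand, the defining anticommutator expression evaluated at $(\z,\z,\z)$ reads $\cT(\z,\z,\z) = 3\,\z\{\z,\z\} = 3\,\z\,(2\z^2) = 6\,\z^3$. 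Since we work over $\BC$, dividing by $6$ yields $\z^3 = 0$.

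The only point requiring care in the first approach is the degenerate case, namely recognizing that the three-term sum $\e_a^2\e_b+\e_a\e_b\e_a+\e_b\e_a^2$ is a nonzero multiple of $\cT(\e_a,\e_a,\e_b)$ rather than an independent relation; this is where the identity $1+\omega+\omega^2=0$ (equivalently Lemma~\ref{lem:2.001}) does the work. In the second approach the corresponding subtlety is merely checking that a trilinear map vanishing on all basis triples vanishes on all triples, which is routine once trilinearity is noted. I expect no genuine obstacle beyond this bookkeeping.
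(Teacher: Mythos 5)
Your proposal is correct and takes essentially the same approach as the paper: the paper's entire proof is the identity $\z^3 = \sum_{i,j,k} z_i z_j z_k\, \cT(\e_i,\e_j,\e_k) = 0$, which is exactly your second (trilinearity) formulation up to the multiplicative constant $6$ that the paper glosses over, with the vanishing justified by \eqref{Eq:2.002}, $1+\omega+\omega^2=0$, and $\e_i^3=0$. Your multiset case analysis simply makes explicit the multiplicity bookkeeping that the paper's one-line computation suppresses.
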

\begin{proof} By direct computation we get
\begin{gather*}
\z^3 = \sum_{i,j,k} z_iz_jz_k \cT(\e_i, \e_j, \e_k)=0.
\end{gather*}
since $\omega$ is the 3rd root of the unit and $\e_i^3=0.$
 \end{proof}

 We now present some decomposition results for the finite ternary Grassmann algebra. The results will be expressed in terms of the basis element $\e_d$ but are easily extendable to any basis element $\e_j$ with appropriate modifications.

\begin{lemma} \label{lem:2.004a} Every element $\z \in \cG_{3,d}$ admits the following decomposition
\begin{equation} \label{Eq:2:012a}
\z  = A + B \e_d + C \e_d^2, \quad A, B, C \in \cG_{3, d-1}.
\end{equation}
\end{lemma}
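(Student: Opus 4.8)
The plan is to read the decomposition directly off the monomial basis \eqref{Eq:2:005}. By \eqref{Eq:2:006}, every $\z \in \cG_{3,d}$ is a finite linear combination
\[
\z = \sum_{\bnu \in \mathcal{I}_d} z_{\bnu}\,\e^{\bnu}, \qquad \e^{\bnu} = \e_1^{\nu_1}\cdots\e_d^{\nu_d},
\]
of the ordered monomials. Since the last exponent $\nu_d$ can only take the values $0,1,2$ (this is already encoded in $\mathcal{I}_d = \{0,1,2\}^d$, cf. Lemma \ref{lem:2.001}(i)), I would simply partition the index set $\mathcal{I}_d$ according to the value of $\nu_d$ and collect the three resulting groups of terms.

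Concretely, writing $\bnu = (\bnu', \nu_d)$ with $\bnu' = (\nu_1,\ldots,\nu_{d-1}) \in \mathcal{I}_{d-1}$, and using that in the ordered monomial the factor $\e_d^{\nu_d}$ already stands on the far right, one has $\e^{\bnu} = \e^{\bnu'}\,\e_d^{\nu_d}$ with $\e^{\bnu'} = \e_1^{\nu_1}\cdots\e_{d-1}^{\nu_{d-1}} \in \cG_{3,d-1}$. Grouping the sum by $\nu_d \in \{0,1,2\}$ then gives
\begin{equation*}
\z = \Big(\sum_{\bnu' \in \mathcal{I}_{d-1}} z_{(\bnu',0)}\,\e^{\bnu'}\Big) + \Big(\sum_{\bnu' \in \mathcal{I}_{d-1}} z_{(\bnu',1)}\,\e^{\bnu'}\Big)\e_d + \Big(\sum_{\bnu' \in \mathcal{I}_{d-1}} z_{(\bnu',2)}\,\e^{\bnu'}\Big)\e_d^2,
\end{equation*}
so that setting $A,B,C$ equal to the three parenthesized sums yields \eqref{Eq:2:012a} with $A,B,C \in \cG_{3,d-1}$, as required.

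The argument is essentially bookkeeping, so I do not expect a genuine obstacle; the only point that needs care is the legitimacy of factoring $\e_d^{\nu_d}$ out on the right. This is precisely where it matters that $\e_d$ carries the top index: in the chosen ordered basis \eqref{Eq:2:005} the generator $\e_d$ is always written last, so splitting a monomial as $\e^{\bnu'}\e_d^{\nu_d}$ requires no reordering of generators and therefore introduces no powers of $\omega$ coming from \eqref{Eq:2:008c}. I would also note that the coefficients $A,B,C$ are in fact uniquely determined, since the monomials $\e^{\bnu'}$ form a basis of $\cG_{3,d-1}$; the statement asserts only existence, but this uniqueness is what makes the decomposition a usable starting point for the inductive arguments on the dimension $d$ that follow.
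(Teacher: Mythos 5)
Your proof is correct: grouping the basis expansion \eqref{Eq:2:006} by the last exponent $\nu_d$ and factoring $\e_d^{\nu_d}$ out on the right (which needs no reordering, hence no $\omega$-phases, precisely because $\e_d$ is last in the ordered monomials \eqref{Eq:2:005}) is exactly the ``straightforward'' argument the paper has in mind when it omits the proof. Your added remark on uniqueness of $A,B,C$ is a correct and harmless bonus beyond the stated existence claim.
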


The result is straightforward and its proof will be omitted.
\begin{lemma} \label{lem:2.004b} For every $A \in \cG_{3,d-1},$ there exists $A' \in \cG_{3,d-1}$ such that
\begin{equation} \label{Eq:2:012b}
A  \e_d = \e_d A'.
\end{equation}
\end{lemma}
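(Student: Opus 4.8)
The plan is to reduce the statement to the basis monomials of $\cG_{3,d-1}$ and then simply to carry $\e_d$ across each generator, collecting the resulting powers of $\omega$. Since $\cG_{3,d-1}$ is spanned by the monomials $\e^{\bnu}=\e_1^{\nu_1}\cdots\e_{d-1}^{\nu_{d-1}}$ with $\bnu\in\mathcal I_{d-1}$, and since the identity $A\e_d=\e_d A'$ is linear in $A$ with $A'$ depending linearly on $A$, it suffices to produce, for each such monomial, an element $(\e^{\bnu})'\in\cG_{3,d-1}$ satisfying $\e^{\bnu}\e_d=\e_d\,(\e^{\bnu})'$.

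The computation itself rests on (\ref{Eq:2:008b}). Every generator $\e_i$ occurring in $\e^{\bnu}$ has index $i<d$, so $\e_i\e_d=\omega\,\e_d\e_i$ and hence $\e_i^{\nu_i}\e_d=\omega^{\nu_i}\e_d\,\e_i^{\nu_i}$. Moving the factor $\e_d$ leftward past all of $\e_1^{\nu_1},\ldots,\e_{d-1}^{\nu_{d-1}}$ in turn then gives $\e^{\bnu}\e_d=\omega^{|\bnu|}\,\e_d\,\e^{\bnu}$, where $|\bnu|=\nu_1+\cdots+\nu_{d-1}$. Because $A$ contains no factor $\e_d$, we have $\nu_d=0$ throughout, so this relocation never raises a power above $2$ and no term is annihilated along the way. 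As a cross-check one may instead apply the closed product rule (\ref{Eq:2:008c}): viewing $\e^{\bnu}$ in $\cG_{3,d}$ via $\nu_d=0$ and writing $\e_d=\e^{\bmu}$ with $\bmu=(0,\ldots,0,1)$, one finds $\sigma(\bnu,\bmu)=1$ and $\sigma(\bmu,\bnu)=\omega^{2|\bnu|}$, which yields the same factor since $\omega^{-2}=\omega$.

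It remains only to assemble the answer. Setting $(\e^{\bnu})'=\omega^{|\bnu|}\e^{\bnu}\in\cG_{3,d-1}$ and extending linearly, for $A=\sum_{\bnu}a_{\bnu}\e^{\bnu}$ we take $A'=\sum_{\bnu}\omega^{|\bnu|}a_{\bnu}\e^{\bnu}$, which again lies in $\cG_{3,d-1}$, and conclude $A\e_d=\e_d A'$. In terms of the grading this reads $A'=\sum_{k}\omega^{k}[A]_k$, so $A'$ is obtained from $A$ by multiplying each blade $[A]_k$ by $\omega^{k}$ (equivalently, the three $\mathbb{Z}_3$-components by $1,\omega,\omega^2$).

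I do not expect a genuine obstacle here: the entire content is the bookkeeping of the factor $\omega^{|\bnu|}$ accumulated in the commutation, together with the observation that the absence of $\e_d$ in $A$ prevents any power from exceeding $2$ and hence rules out the vanishing case of (\ref{Eq:2:008c}). The mild point worth stating explicitly is that, since $\omega^3=1$, this factor depends only on $|\bnu|\od$, which is precisely why $A'$ is governed by the $\mathbb{Z}_3$-grading.
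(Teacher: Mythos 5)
Your proof is correct and follows essentially the same route as the paper: both commute $\e_d$ leftward through each monomial using $\e_j\e_d=\omega\,\e_d\e_j$, accumulate the factor $\omega^{|\bnu|}$, and conclude with $A'=\sum_{\bnu}\omega^{|\bnu|}a_{\bnu}\e^{\bnu}$. Your added remarks (the $\sigma$-cross-check and the observation that $A'$ depends only on the $\mathbb{Z}_3$-grading of $A$) are consistent embellishments, not a different argument.
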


\begin{proof} Recall that $\e_j \e_d = \omega \e_d \e_j, ~j=1, \cdots, d-1.$ Hence, we have for $A = \sum_{\bnu \in \cI_{d-1}} a_{\bnu} {\e}^{\bnu} \in \cG_{3,d-1}$
\begin{gather*}
A\e_d =  \left(\sum_{ \bnu \in \cI_{d-1}} a_{\bnu} \e^{\bnu} \right) \e_d = \sum_{\bnu \in \cI_{d-1}} a_{\bnu} \e_1^{\nu_1} \cdots \e_{d-2}^{\nu_{d-2}} (\e_{d-1}^{\nu_{d-1}} \e_d) \\
= \sum_{\bnu \in \cI_{d-1}} a_{\bnu}  \e_1^{\nu_1} \cdots \e_{d-2}^{\nu_{d-2}} \left( \omega^{\nu_{d-1}} \e_d \e_{d-1}^{\nu_{d-1}}\right) = \cdots = \sum_{\bnu \in \cI_{d-1}} a_{\bnu} \omega^{\sum_{j=1}^d \nu_j} \e_d \e_1^{\nu_1} \cdots \e_{d-2}^{\nu_{d-2}} \e_{d-1}^{\nu_{d-1}} \\
= \e_d \left( \sum_{\bnu \in \cI_{d-1}} a_{ \bnu} \omega^{\sum_{j=1}^d \nu_j}  \e_1^{\nu_1} \cdots \e_{d-2}^{\nu_{d-2}} \e_{d-1}^{\nu_{d-1}} \right) := \e_d A'.
\end{gather*}

\end{proof}

\begin{corollary} \label{cor:2.004a} If we have $A=0$ in decomposition (\ref{Eq:2:012a}) then $\z^3=0.$
\end{corollary}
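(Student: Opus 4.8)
The plan is to exploit the hypothesis $A=0$ directly: it says precisely that $\z = B\e_d + C\e_d^2$ with $B,C\in\cG_{3,d-1}$, so \emph{every} monomial occurring in $\z$ carries the generator $\e_d$ to a strictly positive power. I would then track the exponent of $\e_d$ (the last coordinate $\nu_d$ in the multi-index notation of \eqref{Eq:2:005}) through a triple product and show it is forced to be at least $3$, which makes the product vanish by \eqref{Eq:2:004}. No commutation constants ever need to be computed.

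Concretely, first expand $\z = \sum_{\bnu:\,\nu_d\geq 1} z_{\bnu}\,\e^{\bnu}$, where the sum ranges only over multi-indices with $\nu_d\in\{1,2\}$, since $A=0$ removes all terms with $\nu_d=0$. Cubing and using bilinearity of the product gives
\begin{equation*}
\z^3 = \sum_{\bnu,\bmu,\bgamma} z_{\bnu}\,z_{\bmu}\,z_{\bgamma}\;\e^{\bnu}\e^{\bmu}\e^{\bgamma},
\end{equation*}
where each index runs over those with last coordinate $\geq 1$. For a fixed ordered triple I would apply the multiplication rule \eqref{Eq:2:008c} twice, associatively: whenever the product $\e^{\bnu}\e^{\bmu}\e^{\bgamma}$ is nonzero it equals a scalar (a power of $\omega$) times $\e^{\bnu+\bmu+\bgamma}$, and this requires $\bnu+\bmu+\bgamma\in\mathcal I_d$, i.e. every coordinate is $\leq 2$. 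In particular the last coordinate must satisfy $\nu_d+\mu_d+\gamma_d\leq 2$.

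The contradiction is then immediate: since each of $\nu_d,\mu_d,\gamma_d$ is at least $1$, their sum is at least $3$, so $\bnu+\bmu+\bgamma\notin\mathcal I_d$ and the coefficient $\sigma$ in \eqref{Eq:2:008c} is $0$. Hence every term in the expansion of $\z^3$ vanishes and $\z^3=0$. The only point requiring a little care is the bookkeeping of the $\e_d$-exponent across the two successive applications of \eqref{Eq:2:008c}: one must note that an intermediate annihilation (the partial product $\e^{\bnu}\e^{\bmu}$ already being zero) only helps, and that when both products survive the last-coordinate exponent is genuinely additive. An alternative, more algebraic route would repeatedly invoke Lemma \ref{lem:2.004b} to push all occurrences of $\e_d$ to the right, rewriting $\z^3$ as $Y\e_d^{m}$ with $Y\in\cG_{3,d-1}$ and $m\geq 3$, and then conclude via $\e_d^3=0$ from Lemma \ref{lem:2.001}; but the multi-index count above is cleaner because it never forces one to carry the accumulated $\omega$-factors.
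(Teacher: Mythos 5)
Your proof is correct, but it takes a genuinely different route from the paper's. The paper's own argument is structural and one line long: since $A=0$, it writes $\z=(B+C\e_d)\e_d$ and appeals to the two preceding lemmas — Lemma \ref{lem:2.004a} for the decomposition and Lemma \ref{lem:2.004b} (the commutation $A\e_d=\e_d A'$, used to move factors of $\e_d$ past elements of $\cG_{3,d-1}$) — to collect the three occurrences of $\e_d$ in $\z^3$ into a single factor $\e_d^3$, which vanishes by Lemma \ref{lem:2.001}. This is precisely the ``alternative, more algebraic route'' you sketch at the end of your proposal. Your primary argument instead expands $\z$ in the monomial basis and tracks the last coordinate $\nu_d$ through two applications of the multiplication rule \eqref{Eq:2:008c}: every monomial of $\z$ has $\nu_d\geq 1$, so in any triple product $\e^{\bnu}\e^{\bmu}\e^{\bgamma}$ the last-coordinate sum is at least $3$, forcing either the partial product or the full product outside $\cI_d$, hence zero. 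Both proofs are valid. Your computation buys independence from the commutation lemma and from any bookkeeping of $\omega$-factors (only the vanishing of $\sigma$ matters), and it is genuinely self-contained; the paper's version buys brevity, reusing machinery it has just built and that serves elsewhere. The one delicate point in your route — that an intermediate annihilation $\e^{\bnu}\e^{\bmu}=0$ only helps, and that exponents are additive when both partial products survive — is exactly the point you flag and handle correctly.
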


This is an obvious consequence of the two previous lemmas, as $\z = (B+C\e_d)\e_d.$

\begin{lemma} For every $1\leq n \leq d$ there exists an element $I_n = \e^2_{d-n+1} \cdots \e^2_d$ such that it satisfies:
\begin{enumerate}[i)]
\item $I_n$ is nilpotent, that is $I_n^2=0;$
\item $I_n$ acts as a projector of $\cG_{3,d}$ onto the subalgebra $\cG_{3,d-n},$ that is to say, there exists a projector $P_n : \cG_{3,d} \rightarrow \cG_{3,d-n}$ given by
$$\z= \sum_{\bmu \in \cI_d} z_{\bmu} \e^{\bmu} \in \cG_{3,d} \mapsto P_n(\z): = \sum_{ \bnu \in \cI_{d-n}} z_{\bnu} \e^{\bnu} \in \cG_{3,d-n},$$ where $ \bnu = (\nu_1, \cdots, \nu_{d-n});$
\item In particular, $P_d$ acts as a projector of $\cG_{3,d}$ onto $\BC$, given by $\z= \sum_{\bmu \in \cI_d} z_{\bmu} \e^{\bmu} \in \cG_{3,d} \mapsto P_d (\z) := [\z]_0 = z_{\boldsymbol{0}} := z_{(0, \cdots, 0)} .$
\end{enumerate}
\end{lemma}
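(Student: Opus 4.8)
The plan is to reduce all three claims to the single multiplication rule (\ref{Eq:2:008c}). Write $I_n=\e^{\balpha}$ with the multi-index $\balpha=(\alpha_1,\dots,\alpha_d)$ defined by $\alpha_j=2$ for $d-n+1\le j\le d$ and $\alpha_j=0$ otherwise. Then claim (i) is immediate: since $(\balpha+\balpha)_j=4\ge 3$ for every $j$ with $\alpha_j=2$, we have $\balpha+\balpha\notin\cI_d$, so $\sigma(\balpha,\balpha)=0$ and $I_n^2=\e^{\balpha}\e^{\balpha}=0$ by (\ref{Eq:2:008c}); equivalently $\e_j^4=0$ by Lemma \ref{lem:2.001}.

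For (ii) I would realise $P_n$ through \emph{right} multiplication by $I_n$. Expanding $\z=\sum_{\bmu\in\cI_d}z_{\bmu}\e^{\bmu}$ and applying (\ref{Eq:2:008c}) gives $\z I_n=\sum_{\bmu}z_{\bmu}\,\sigma(\bmu,\balpha)\,\e^{\bmu+\balpha}$. By (\ref{Eq:2:008a}) the factor $\e^{\bmu}\e^{\balpha}$ vanishes unless $\mu_j=0$ for every $j>d-n$, so the surviving multi-indices are exactly those of the form $\bmu=(\nu_1,\dots,\nu_{d-n},0,\dots,0)$ with $\bnu=(\nu_1,\dots,\nu_{d-n})\in\cI_{d-n}$. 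The decisive computation is the cocycle $\sigma(\bmu,\balpha)$ on these surviving terms: in the exponent $\sum_{s=1}^{d-1}\sum_{j=s+1}^d\mu_j\alpha_s$ a nonzero $\alpha_s$ forces $s\ge d-n+1$, and then the condition $j>s$ forces $j>d-n$, where $\mu_j=0$; hence the whole exponent vanishes and $\sigma(\bmu,\balpha)=1$. Consequently $\z I_n=\bigl(\sum_{\bnu\in\cI_{d-n}}z_{\bnu}\e^{\bnu}\bigr)I_n=P_n(\z)\,I_n$, which is precisely the sense in which $I_n$ ``acts as'' the projector $P_n$.

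It then remains to confirm that $P_n$ is a genuine projection. Linearity is clear from its defining formula. Restricting $P_n$ to the subalgebra $\cG_{3,d-n}\subset\cG_{3,d}$ (the span of those $\e^{\bmu}$ with $\mu_j=0$ for $j>d-n$) yields the identity, since dropping coordinates that are already zero changes nothing; thus $P_n\circ\iota=\mathrm{id}$ for the inclusion $\iota$, and $\iota\circ P_n$ is idempotent. Claim (iii) is then the special case $n=d$, where $\cG_{3,0}=\BC$ and $P_d(\z)=z_{\boldsymbol{0}}=[\z]_0$. I expect the only real friction to be the bookkeeping in the cocycle computation: one must use right (not left) multiplication by $I_n$, because the symmetric computation for $I_n\z$ produces a nontrivial power of $\omega$ and would not yield a clean projector. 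A secondary point, should one wish to recover $P_n(\z)$ unambiguously from the product $\z I_n$, is that $A\mapsto A\,I_n$ is injective on $\cG_{3,d-n}$, which follows since $\sigma(\bnu,\balpha)=1$ and the indices $\bnu+\balpha$ are distinct for distinct $\bnu\in\cI_{d-n}$.
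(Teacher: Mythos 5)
Your proposal is correct and follows essentially the same route as the paper's own proof: right multiplication by $I_n$ annihilates, via the componentwise rule \eqref{Eq:2:011} (equivalently \eqref{Eq:2:008a}), every term of $\z$ involving $\e_{d-n+1},\dots,\e_d$, and the surviving sum over $\cI_{d-n}$ is identified with $P_n(\z)$. The only difference is that you additionally verify the cocycle value $\sigma(\bmu,\balpha)=1$ on the surviving indices (so that $\z I_n = P_n(\z)\,I_n$ holds exactly, and the left/right asymmetry is explained), a detail the paper leaves implicit.
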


\begin{proof} The fact that $I_n$ is nilpotent is straightforward. For the projection part, we observe that by (\ref{Eq:2:011}) the product of an arbitrary $\z$ by $I_n$ kills off all terms containing $\e_{d-n+1}, \cdots, \e_d,$ that is, given
$$\z = \sum_{\bmu  \in \cI_d} z_{\bmu} \e^{\bmu} = \sum_{\bmu  \in \cI_{d-n}} z_{\bmu} \e^{\bmu} + \sum_{\bmu  \in \cI_d \setminus \cI_{d-n}} z_{\bmu} \e^{\bmu},$$ we have
$$\z I_n = (\sum_{\bmu  \in \cI_d} z_{\bmu} \e^{\bmu}) I_n = \sum_{\bmu  \in \cI_{d-n}} z_{\bmu} \e^{\bmu} I_n, \qquad  \bmu = (\mu_1, \cdots, \mu_{d-n}).$$
Hence, we identify the projection of $\z$ into $\cG_{3,d-n}$ with $P_n(\z) := \sum_{\bmu \in \cI_{d-n}} z_{\bmu} \e^{\bmu}.$ The third proposition is now immediate.
 \end{proof}

\begin{remark} \label{Rem:2.004} Based on the last proposition, the non-scalar part of an element $\z \in \cG_{3,d}$ is obtained by
$$ (1-P_d) \z = \sum_{{\bmu \in \cI_d} \atop {\bmu \not= \boldsymbol{0}}} z_{\bmu} \e^{ \bmu}.$$ Hence,
$$ \z =P_d \z + (1-P_d) \z.$$
%%%%%%%%NOVO
Henceforward we shall use the notations $z_{\boldsymbol{0}} := P_d \z$ for its scalar part (also, \textit{body}  of $\z$) and $\z_r := (1-P_d) \z = \sum_{{\bmu \in \cI_d} \atop {\bmu \not= \boldsymbol{0}}} z_{\bmu} \e^{\bmu}$ for its remainder (also, \textit{soul}  of $\z$).
\end{remark}

\begin{lemma} \label{lem:2.005} An arbitrary element $\z \in \cG_{3,d}$ is invertible if and only if its scalar part $[\z]_0 = z_{\boldsymbol{0}}$ is non-zero.
\end{lemma}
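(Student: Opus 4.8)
The plan is to exploit the body--soul decomposition $\z = z_{\boldsymbol{0}} + \z_r$ from Remark \ref{Rem:2.004}, together with the crucial observation that the soul is nilpotent. First I would record this nilpotency: since $\z_r$ has no grade-$0$ component, it lies in $[\cG_{3,d}]_1 \oplus \cdots \oplus [\cG_{3,d}]_{2d}$, and by Lemma \ref{lem:2.003} the product of $m$ homogeneous factors each of grade $\geq 1$ lands in grade $\geq m$ (or vanishes). As the top grade available is $2d$, every term of $\z_r^{\,2d+1}$ has grade strictly larger than $2d$, whence $\z_r^{\,2d+1} = 0$. Thus the soul is nilpotent.

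For the forward implication I would argue by contraposition. If $z_{\boldsymbol{0}} = 0$ then $\z = \z_r$ is nilpotent, say $\z^{N} = 0$. A nilpotent element of a unital ring cannot be invertible: were $\z\,\z^{-1} = 1$, then $1 = (\z^{-1})^{N}\z^{N} = 0$, a contradiction. Hence invertibility of $\z$ forces $z_{\boldsymbol{0}} \neq 0$. Equivalently, one checks from Lemma \ref{lem:2.003} that the only contribution to grade $0$ in a product comes from the grade-$0$ parts, so that $[\z\w]_0 = z_{\boldsymbol{0}} w_{\boldsymbol{0}}$ and $P_d$ is a unital algebra homomorphism onto $\BC$, forcing $z_{\boldsymbol{0}}$ to be a unit in $\BC$.

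For the converse, suppose $z_{\boldsymbol{0}} \neq 0$. Since $\BC$ is central in $\cG_{3,d}$, I would factor $\z = z_{\boldsymbol{0}}\,(1 + n)$ with $n := z_{\boldsymbol{0}}^{-1}\z_r$, which is again a soul and hence nilpotent with $n^{N} = 0$, $N = 2d+1$. Then $1+n$ is inverted by the terminating Neumann series $S := \sum_{k=0}^{N-1}(-n)^k$; the standard geometric identity $(1-(-n))\sum_{k=0}^{N-1}(-n)^k = 1 - (-n)^N$ gives $(1+n)S = S(1+n) = 1 - (-n)^{N} = 1$. Consequently $\z^{-1} = z_{\boldsymbol{0}}^{-1} S$ is a genuine inverse.

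The only point requiring care is the non-commutativity of $\cG_{3,d}$, where in general left and right inverses must be distinguished. Here the difficulty evaporates because the Neumann series involves only powers of the single element $n$, which commute among themselves and with the central scalar $z_{\boldsymbol{0}}$; thus $S$ commutes with $1+n$ and the inverse is automatically two-sided. The structural input that makes everything work is the nilpotency of the soul, itself a direct consequence of the bounded grading established in Lemma \ref{lem:2.003}.
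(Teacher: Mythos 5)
Your proposal is correct and follows essentially the same route as the paper's own proof: necessity by contraposition using the fact that the grade-zero part of a product is the product of the grade-zero parts (equivalently, nilpotency of a soul-only element), and sufficiency via the terminating Neumann series applied to the body--soul decomposition $\z = z_{\boldsymbol{0}}(1 + z_{\boldsymbol{0}}^{-1}\z_r)$. You in fact supply two details the paper leaves implicit: an explicit nilpotency bound $\z_r^{2d+1}=0$ deduced from the grading in Lemma \ref{lem:2.003}, and the observation that the inverse is automatically two-sided because the series is a polynomial in the single element $z_{\boldsymbol{0}}^{-1}\z_r$ and hence commutes with it (the paper instead just remarks that ``the same construction holds for the left inverse'').
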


\begin{proof} We begin our proof by showing that if the scalar part of an element $\z \in \cG_{3,d}$ is zero then this element cannot be invertible. Indeed, if $[\z]_0 =0$ then $\z = \sum_{{\nu \in \cI_d} \atop {\nu \not= \mathbf{0}}} z_\nu \e^\nu$ and by Lemma \ref{lem:2.003} we get $[\z \w]_0 = 0$ for all $\w \in \cG_{3,d}.$ Hence, $\z$ is not invertible.

Now we consider $[\z]_0 \not=0.$ First, we observe that there exists $m \in \BN$ such that $\z_r^m =0.$ Take  $m(\z) := \min \{m \in \BN : \z_r^m =0 \}.$ Whenever $z_{\mathbf{0}} \not=0$ we have
\begin{gather*}
\z^n = z_{\mathbf{0}}^n \left(1+\frac{1}{z_{\mathbf{0}}}\soul{\z} \right)^n, \quad n\in \BN.
\end{gather*}

Hence, for $n= m(\soul{\z}) -1$ we obtain
\begin{gather*}
\left(1+ \frac{1}{z_{\mathbf{0}}}\soul{\z} \right) \left(1- \frac{1}{z_{\mathbf{0}}}\soul{\z} + \frac{1}{z_{\mathbf{0}}^2}\soul{\z}^2 + \cdots +  (-1)^{m(\z)-1}  \frac{1}{z_{\mathbf{0}}^{m(\z)-1}}\soul{\z}^{m(\z)-1} \right) =1,
\end{gather*} so that
\begin{gather*}
\z^{-1} = \frac{1}{z_{\mathbf{0}}}- \frac{1}{z_{\mathbf{0}}^2}\soul{\z} + \frac{1}{z_{\mathbf{0}}^3}\soul{\z}^2 + \cdots +  (-1)^{m(\z)-1}  \frac{1}{z_{\mathbf{0}}^{m(\z)}}\soul{\z}^{m(\z)-1},
\end{gather*}  is the right inverse of $\z.$ The same construction holds for the left inverse which proves the unicity of $\z^{-1}$.
 \end{proof}

\subsection{A conjugation  in $\cG_{3,d}$}

We present a morphism acting on the finite generalised ternary algebra $\cG_{3,d}.$

\begin{definition}[Pseudo-conjugation]
The {\it pseudo-conjugation} in the ternary Grassmann algebra $\cG_{3,d}$ is defined as the morphism $\ov{\cdot} : \cG_{3,d} \rightarrow \cG_{3,d},$ with $\z \mapsto \ov{\z} =\sum_{\bnu} {\overline{z}}_{\bnu} ~\ov{\e^{\bnu}},$
where $\overline{z}_{\bnu}$ denotes the standard complex conjugation, while its action on the basis elements $\e^{\bnu}, \bnu \in \cI_d,$ is given by,
\begin{equation}
\ov{1} = 1, \quad \ov{\e}_j = \e_j^{2}, \quad j=1, \cdots, d,
\end{equation} and satisfying to
\begin{equation}
\ov{\mathbf{ab} + \mathbf{c}} = \ov{\mathbf{b}} ~\ov{\mathbf{a}} + \ov{\mathbf{c}}, \quad for ~all ~ \mathbf{a}, \mathbf{b}, \mathbf{c} \in \cG_{3,d}.
\end{equation}
\end{definition}

 \begin{remark} \label{Rem:2.006} As a consequence, we get $\ov{\e_j^2} = \ov{\e}_j \ov{\e}_j = \e_j^2 \e_j^2= 0.$ Hence, $\ov{\e^{\bnu}} =0$ if and only if $\bnu \not\in \{0, 1\}^d.$ Furthermore this morphism is not onto and it is not an involution since $\ov{(\ov \e_j)}=0$.
\end{remark}

\begin{lemma}   For all $\z \in \cG_{3,d}$ it holds:
\begin{enumerate}[i)]
\item $\ov{(\ov \z)} = z_{\mathbf{0}};$
\item $[\z \ov{\z}]_0 = [\ov{\z} \z]_0 =|z_{\mathbf{0}}|^2,$
\end{enumerate} where we recall, $z_{\mathbf{0}}$ denotes the scalar part of $\z.$
\end{lemma}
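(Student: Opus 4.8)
The plan is to verify both identities by direct computation using the defining properties of the pseudo-conjugation, especially the behavior on basis elements recorded in Remark~\ref{Rem:2.006}, and the orthogonality-type multiplication rule of Lemma~\ref{lem:2.003}. The central observation driving everything is that $\ov{\e_j}=\e_j^2$, so $\ov{(\ov{\e_j})}=\ov{\e_j^2}=\e_j^2\e_j^2=0$, and more generally $\ov{\e^{\bnu}}=0$ whenever $\bnu\notin\{0,1\}^d$. This means the conjugation heavily annihilates structure, and applying it twice collapses almost everything to scalars.

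For proposition $(i)$, I would write $\z=\sum_{\bnu}z_{\bnu}\e^{\bnu}$ and compute $\ov{\z}=\sum_{\bnu}\ov{z}_{\bnu}\ov{\e^{\bnu}}$. By Remark~\ref{Rem:2.006}, the only surviving terms in $\ov{\z}$ come from multi-indices $\bnu\in\{0,1\}^d$, for which $\ov{\e^{\bnu}}$ is (up to a $\sigma$-type phase from reordering the square factors) a product of the $\e_j^2$. Applying the conjugation a second time then sends each $\e_j^2$-factor to zero unless there are no such factors at all, i.e.\ unless $\bnu=\mathbf{0}$. Hence the only term of $\z$ that survives two conjugations is the scalar term $z_{\mathbf{0}}$, and since $\ov{(\ov{z_{\mathbf{0}}})}=z_{\mathbf{0}}$ for the complex scalar, we get $\ov{(\ov{\z})}=z_{\mathbf{0}}$. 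The care needed here is bookkeeping the order-reversal in the antihomomorphism $\ov{\mathbf{ab}}=\ov{\mathbf{b}}\,\ov{\mathbf{a}}$ together with the commutation phases, but these phases multiply surviving basis monomials and do not affect the vanishing, so the conclusion is robust.

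For proposition $(ii)$, I would again expand $\z\ov{\z}=\bigl(\sum_{\bnu}z_{\bnu}\e^{\bnu}\bigr)\bigl(\sum_{\bmu}\ov{z}_{\bmu}\ov{\e^{\bmu}}\bigr)$ and extract the blade-$0$ component. By Lemma~\ref{lem:2.003}, a product $\e^{\bnu}\ov{\e^{\bmu}}$ can only contribute to $[\cdot]_0$ when the total grade is $0$; since $\ov{\e^{\bmu}}$ is a nonzero scalar multiple of $\e^{\bmu'}$ (with $\bmu'$ having entries $2$ exactly where $\bmu$ has entries $1$), the grade-zero contribution forces $\e^{\bnu}\e^{\bmu'}\in[\cG_{3,d}]_0$, i.e.\ $\bnu+\bmu'=\mathbf{0}$, which requires $\bnu=\mathbf{0}$ and $\bmu=\mathbf{0}$. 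The single surviving term is $z_{\mathbf{0}}\ov{z}_{\mathbf{0}}=|z_{\mathbf{0}}|^2$, and the identical argument with the factors reversed gives $[\ov{\z}\z]_0=|z_{\mathbf{0}}|^2$.

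The main obstacle I anticipate is not conceptual but notational: correctly tracking the phase factor $\sigma$ and the order reversal when applying $\ov{\cdot}$ to a monomial $\e^{\bnu}$, since reversing the product of the $\e_j^2$ and re-sorting them into canonical order introduces powers of $\omega$. The key simplification to emphasize is that every such phase is a nonzero scalar, so it is irrelevant to deciding which terms vanish; only the \emph{support} of the multi-indices matters for the grade-$0$ projection and for the double-conjugation collapse. I would therefore state the argument at the level of supports, invoking Remark~\ref{Rem:2.006} for the vanishing of any $\e_j^2$ under a further conjugation and Lemma~\ref{lem:2.003} for the grade additivity, and leave the explicit $\sigma$-phases implicit since they cancel or are immaterial.
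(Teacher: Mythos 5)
Your proposal is correct and follows essentially the same route as the paper's own proof: expand on the monomial basis, use $\ov{\e_j^{2}}=\e_j^{2}\e_j^{2}=0$ (Remark~\ref{Rem:2.006}) so that the double conjugation kills every term except the scalar one, and use grade additivity (Lemma~\ref{lem:2.003}) to see that the blade-$0$ part of $\z\ov{\z}$ and $\ov{\z}\z$ can only come from $\bnu=\bmu=\mathbf{0}$. Your observation that the $\sigma$-phases are nonzero scalars and hence irrelevant to the vanishing argument is exactly the simplification implicit in the paper's computation, which tracks them explicitly via $\ov{\e^{\bnu}}=\sigma(\bnu,\bnu)\e^{2\bnu}$.
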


\begin{proof} The first proposition is obvious since the action of the pseudo-conjugation on the basis elements is given by
$$\ov{\e^{\bnu}}  =  \e_d^{2\nu_d} \cdots \e_2^{2\nu_2}\e_1^{2\nu_1},\qquad \mbox{\rm whenever } \bnu \in \{0, 1\}^d,$$ and zero otherwise.
Furthermore, as $\e_j^2 \e_i^2 = \omega^2 \e_i^2 \e_j^2, ~i<j,$ we obtain
$$\ov{\e^{\bnu}} = \sigma(\bnu, \bnu) \e^{2\bnu} := \omega^{2(\sum_{j=1}^{d-1} \sum_{s=j+1}^d \nu_j \nu_s)} \e^{2\bnu}, \quad \bnu \in \{0, 1\}^d.$$
Hence,
\begin{gather*}
\ov{(\ov \z)}  =  \ov{\sum_{\bnu \in \{0, 1\}^d} \ov z_{\bnu} \e_d^{2\nu_d} \cdots \e_2^{2\nu_2}\e_1^{2\nu_1}}  = \sum_{\bnu \in \{0, 1\}^d} z_{\bnu} \ov{\e_1^{2\nu_1}}~ \ov{\e_2^{2\nu_2}} \cdots \ov{\e_d^{2\nu_d}} = \sum_{\bnu \in \{0, 1\}^d} z_{\bnu} {\e_1^{4\nu_1}}~ {\e_2^{4\nu_2}} \cdots {\e_d^{4\nu_d}} = z_{\mathbf 0}.
\end{gather*}

For the second proposition we have
\begin{gather*}
[\z \ov{\z}]_0 = \left[ \left( \sum_{\bnu \in \cI_d} z_{\bnu} \e^{\bnu} \right) \left( \ov{\sum_{\bmu\in \cI_d} z_{\bmu} \e^{\bmu}} \right) \right]_0 \\
= \left[ \sum_{{\bnu \in \cI_d} \atop {\bmu \in \{ 0,1\}^d} } z_{\bnu} \ov{z}_{\bmu} \e_1^{{\nu}_1} \e_2^{{\nu}_2} \cdots  \e_d^{{\nu}_d} \e_d^{2\mu_d} \cdots \e_2^{2\mu_2}\e_1^{2\mu_1} \right]_{0} = z_{\boldsymbol{0}}  \ov z_{\boldsymbol{0}} = |z_{\boldsymbol{0}} |^2.
\end{gather*}
The same holds for $[\ov{\z} \z]_0,$ which completes our proof.
\end{proof}

\section{Completions of Grassmann Algebras}
\label{sec-completions}

We now consider the ternary Grassmann algebra generated by taking the formal limit $d\rightarrow \infty$ of $\cG_{3,d}.$ We denote the correspondent algebra by $\cG_3,$ associated to the countable set $\fG= \{ \e_j, j \in \BN \}.$ Similar to the case of the infinite dimensional Grassmann algebra $\Lambda_\infty$ the resulting ternary Grassmann algebra $\cG_3$  is an associative but not commutative algebra over $\BC.$

We denote its elements $\z = \sum_{\bnu \in \cI} z_{\bnu} \e^{\bnu} \in \cG_3$ as \textit{ternary supernumbers} where  $\cI = \{ 0,1, 2\}^{\BN}$ denotes the set of indexes, and we endow the ternary Grassmann algebra $\cG_3$ with a $p$-norm. Remark that, since the pseudo-conjugation is not an isomorphism it does not induce a norm. Hence, we will use the $\ell^p$-norm where $\z = \sum_{\bnu \in \cI} z_{\bnu} \e^{\bnu}$ is to be identified with $(z_{\bnu})_{\bnu \in \cI} \in \ell^p(\BC).$

The conjugation and product in $\cG_3$ (an infinite dimensional algebra) are well defined provide that  $\bnu$ satisfy $\# \bnu < \infty,$ where $\# \bnu$ denotes the number of non-zero entries in the sequence  $\bnu$ (equal to the number of $\e_j's$ present in the basis element $\e^{\bnu}$). For example, for $\bnu = (1, 0, 2, 1, 0,0, \ldots)$ we get $\# \bnu = 3$ corresponding to  $\e^{\bnu} = \e_1 \e_3^2\e_4.$

Hence,  we have for the conjugation
\begin{gather} \label{Eq:3:InftyConjugation}
\mathbf{w} = \sum_{\bmu \in \cI} w_{\bmu} \e^{\bmu} \quad \mapsto \quad  \overline{\w} = \sum_{\bmu \in \{ 0, 1\}^{\BN}} \sigma(\bmu, \bmu) ~\overline w_{\bmu} \e^{2\bmu},
\end{gather}
and for the product between $\z = \sum_{\bnu \in \cI} z_{\bnu} \e^{\bnu}, \mathbf{w} = \sum_{\bmu \in \cI} w_{\bmu} \e^{\bmu}\in \cG_3$ we get
\begin{gather} \label{Eq:3:InfinitProduct}
\z \w = \sum_{\bnu,\bmu \in \cI} \sigma(\bnu, \bmu) ~z_{\bnu} w_{\bmu} \e^{\bnu+\bmu},
\end{gather} under the restriction $\# \bnu, \# \bmu$ are finite, and where $\sigma(\bnu, \bmu)$ is defined as in (\ref{Eq:2:008c}).  We stress that only under particular conditions we have $\sigma(\bnu, \bmu) = \overline{\sigma(\bmu, \bnu)}.$

Henceforth we assume all sequences to have a finite number of non-zero entries.

\begin{definition}\label{Def:4.1}
Let $p\in \BN.$ We define the $p$-norm of $\z\in\cG_{3}$ is defined as
\begin{equation}
\| \z \|_p = \left(\sum_{\bnu \in \cI} |z_{\bnu} |^p \right)^{1/p},
\label{pnorm}
\end{equation}
where $|\cdot|$ is the usual modulus of a complex number.
\end{definition}

We remark that Definition \ref{Def:4.1} holds also for any real $p \geq 1.$

Restricted to the finite dimensional sub-algebra $\cG_{3,d}$ the $p$-norm satisfy the following properties:

\begin{theorem} \label{Th:3.1}
For all $\z, \w\in\cG_{3,d}$ it holds
\begin{enumerate}[i)]
\item \begin{equation} \label{1norm-ineq}
\Vert \z\w \Vert_1 \leq \Vert \z \Vert_1 \Vert \w \Vert_1,
\end{equation}
\item and for $p=2, 3, \ldots$
\begin{equation} \label{norm-ineq}
\Vert \z \w \Vert_p^p \leq \Vert \z \Vert_1^p \Vert \w \Vert_{2^{p-1}} \prod_{k=1}^{p-1} \Vert \w\Vert_{2^k}, \quad \Vert \z \w \Vert_p^p \leq \Vert \w \Vert_1^p \Vert \z \Vert_{2^{p-1}} \prod_{k=1}^{p-1} \Vert \z\Vert_{2^k}.
\end{equation}
\end{enumerate}
\end{theorem}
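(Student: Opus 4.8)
The plan is to reduce both inequalities to a single convolution estimate for nonnegative sequences, and then to read the dyadic form off from a generalized H\"older inequality. Writing $\z=\sum_{\bnu}z_\bnu\e^\bnu$ and $\w=\sum_{\bmu}w_\bmu\e^\bmu$, the multiplication rule (\ref{Eq:2:008c}) gives, for each output index $\bta\in\cI_d$,
\[
(\z\w)_\bta=\sum_{\bnu+\bmu=\bta}\sigma(\bnu,\bmu)\,z_\bnu w_\bmu ,
\]
and since $\sigma(\bnu,\bmu)$ is either $0$ or a power of $\omega$, so that $|\sigma(\bnu,\bmu)|\le 1$, the triangle inequality yields $|(\z\w)_\bta|\le S_\bta:=\sum_{\bnu+\bmu=\bta}a_\bnu b_\bmu$, where $a_\bnu:=|z_\bnu|$ and $b_\bmu:=|w_\bmu|$ satisfy $\|a\|_1=\|\z\|_1$ and $\|b\|_q=\|\w\|_q$. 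Hence $\|\z\w\|_p^p\le\sum_\bta S_\bta^p$, and it suffices to bound the truncated convolution $S$ of the nonnegative sequences $a,b$; the truncation ($\sigma=0$ when $\bnu+\bmu\notin\cI_d$) only deletes terms and so never worsens the estimate.

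Next I would prove the master bound $\sum_\bta S_\bta^p\le\|a\|_1^p\|b\|_p^p$ by a single application of H\"older's inequality. Splitting $a_\bnu b_\bmu=a_\bnu^{1/p'}\cdot\bigl(a_\bnu^{1/p}b_\bmu\bigr)$ with $\tfrac1p+\tfrac1{p'}=1$ gives $S_\bta\le\|a\|_1^{1/p'}\bigl(\sum_{\bnu+\bmu=\bta}a_\bnu b_\bmu^p\bigr)^{1/p}$, hence $S_\bta^p\le\|a\|_1^{p-1}\sum_{\bnu+\bmu=\bta}a_\bnu b_\bmu^p$; summing over $\bta$, using Fubini and $\sum_{\bmu}b_\bmu^p=\|b\|_p^p$, yields $\sum_\bta S_\bta^p\le\|a\|_1^p\|b\|_p^p$. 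For $p=1$ this is exactly (\ref{1norm-ineq}), so part (i) is already settled; interchanging the roles of $a$ and $b$ produces the companion estimate $\|\z\w\|_p\le\|\w\|_1\|\z\|_p$, which will give the second inequality in (\ref{norm-ineq}).

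Finally I would deduce the dyadic form (\ref{norm-ineq}) by bounding $\|\w\|_p^p$ from above by the stated product of $\ell^{2^k}$-norms. This is a generalized H\"older (log-convexity) inequality: attach equal weights $\theta=\tfrac1p$ to the exponents $2^1,2^2,\dots,2^{p-1}$ together with one extra copy of $2^{p-1}$; then
\[
\sum_i \frac{\theta_i}{q_i}=\frac1p\Bigl(\frac{1}{2^{p-1}}+\sum_{k=1}^{p-1}\frac{1}{2^k}\Bigr)=\frac1p ,
\]
because $\sum_{k=1}^{p-1}2^{-k}=1-2^{-(p-1)}$. Thus the harmonic combination lands exactly on exponent $p$, and generalized H\"older gives $\|\w\|_p\le\bigl(\|\w\|_{2^{p-1}}\prod_{k=1}^{p-1}\|\w\|_{2^k}\bigr)^{1/p}$, i.e. $\|\w\|_p^p\le\|\w\|_{2^{p-1}}\prod_{k=1}^{p-1}\|\w\|_{2^k}$. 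Combining this with $\|\z\w\|_p^p\le\|\z\|_1^p\|\w\|_p^p$ gives the first inequality in (\ref{norm-ineq}), and the symmetric master bound gives the second.

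The H\"older bookkeeping is routine; the one point worth verifying carefully is that the exponents $2^k$ appearing in the statement are precisely those for which the weights $\tfrac1p$ make the harmonic mean equal $\tfrac1p$, which is what turns the dyadic product into an upper bound for $\|\w\|_p^p$. I would also remark that the master estimate is in fact stronger than what is claimed (the norm inequality $\|\w\|_p^p\le\|\w\|_{2^{p-1}}\prod_k\|\w\|_{2^k}$ is an equality only for ``flat'' $\w$), so (\ref{norm-ineq}) is a deliberately weakened, uniform-in-$p$ packaging. An alternative, more hands-on route that avoids the master bound is to iterate the elementary Cauchy--Schwarz estimate $\bigl(\sum_{\bnu+\bmu=\bta}a_\bnu b_\bmu^m\bigr)^2\le\|a\|_1\sum_{\bnu+\bmu=\bta}a_\bnu b_\bmu^{2m}$, which is exactly where the doubling of the norm index $2^k$ originates.
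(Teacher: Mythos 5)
Your proof is correct, and it takes a genuinely different route from the one the paper relies on. The paper itself gives no argument: it declares part (i) straightforward and for \eqref{norm-ineq} defers to \cite{alpay2018distribution}, where the dyadic exponents are generated directly by repeated Cauchy--Schwarz applied to the convolution bound $S_{\bta}=\sum_{\bnu+\bmu=\bta}|z_{\bnu}|\,|w_{\bmu}|$ --- precisely the iteration $\bigl(\sum_{\bnu+\bmu=\bta}a_{\bnu}b_{\bmu}^{m}\bigr)^{2}\leq\Vert a\Vert_{1}\sum_{\bnu+\bmu=\bta}a_{\bnu}b_{\bmu}^{2m}$ that you flag at the end as an alternative --- followed by a H\"older step in the summation over $\bta$. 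Your main argument instead factors the theorem into two standard inequalities: the Young-type bound $\Vert\z\w\Vert_{p}\leq\Vert\z\Vert_{1}\Vert\w\Vert_{p}$ (one application of H\"older, valid here because $|\sigma(\bnu,\bmu)|\leq1$ by \eqref{Eq:2:008c} and $|\omega|=1$), and the multi-exponent H\"older estimate $\Vert\w\Vert_{p}^{p}\leq\Vert\w\Vert_{2^{p-1}}\prod_{k=1}^{p-1}\Vert\w\Vert_{2^{k}}$, whose exponent bookkeeping $2^{-(p-1)}+\sum_{k=1}^{p-1}2^{-k}=1$ you verify correctly; note the latter is even more directly seen as generalized H\"older applied to the product of $p$ copies of the sequence $(|w_{\bnu}|)$ with exponents $2^{p-1},2^{1},\dots,2^{p-1}$, whose reciprocals sum to $1$. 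Your factorization buys three things: a self-contained proof rather than one outsourced to a reference, the sharper intermediate estimate $\Vert\z\w\Vert_{p}\leq\min\{\Vert\z\Vert_{1}\Vert\w\Vert_{p},\,\Vert\w\Vert_{1}\Vert\z\Vert_{p}\}$ of which \eqref{norm-ineq} is then an explicit weakening, and a transparent handling of noncommutativity (after passing to absolute values the convolution is symmetric in the two factors, so the second inequality in \eqref{norm-ineq} comes for free). What the paper's referenced route buys in exchange is economy of tools --- it never needs $p$-fold H\"older, only the two-term Cauchy--Schwarz step --- which is the form in which the inequality was proved for the classical $\mathbb{Z}_2$-graded Grassmann algebra and the reason the statement carries dyadic norms in the first place.
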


The proof of \eqref{1norm-ineq} is straightforward. Moreover, the proof of \eqref{norm-ineq} follows in the same as the one presented in \cite{alpay2018distribution}, where the Cauchy-Schwarz inequality is used repeated times and having in mind that  $|\omega|=1$.

In order to study analytic properties of stochastic processes taking values in this algebra one needs to consider its completion with respect to the $\ell^2-$norm. In the next section we study the completion of $\cG_{3}$ with respect to the $p$-norm, which we denote by $\comp{p}.$ This closure is widely studied in the literature in the classical case of Grassmann algebras (see, e.g., \cite{alpay2018positivity, rogers1980global, zbMATH00861741}). Also, remark that by \eqref{1norm-ineq} we have that $\comp{1}$ has a Banach algebra structure. The study of completion of the $p-$norm has the purpose of establish a ternary Fock space based on the $\ell^2$-inner product between two supernumbers $\z, \w\in \cG_3$ given by
\begin{equation} \label{Eq: 3_l2innerproduct}
\langle \z, \w\rangle = \sum_{\bnu \in \cI} z_{\bnu} \overline{w_{\bnu}}.
\end{equation}

\subsection{Topological Algebra Associated with $\cG_3$}
\setcounter{equation}{0}
\label{sec-topological}

In order to establish an analysis and stochastic process theory in the framework of ternary Grassmann algebras we need to establish an equivalent to the classic Gel'fand triple $(\cS,\mathbf L^2(\mathbb R,dx),\cS^\prime)$ where $\cS$ is the space of test functions and $\cS^\prime$ denotes its dual (see for instance \cite{MR0209834}). The commutative setting was first introduced by Kondratiev and adapted to the framework of Hida's white noise space theory and commutative Fock space (see \cite{MR1408433,MR1387829}). More, the commutative case is associated with bosons and applied to study solutions of  stochastic differential equations and to model stochastic processes and their derivatives. The construction of the noncommutative counterpart of this theory in the classic setting (see e.g. \cite{MR3231624,MR3038506}) was motivated by fermionic formulation. In our case of topological hypersymmetry we need to establish the corresponding noncommutative counterpart in terms of our algebra and to construct a Gel'fand triple in this ternary Grassmann setting.

Gel'fand triples allow to define other products (on itself not necessarily laws of composition) different from the usual inner product in the Hilbert space.  One such example is the Wick product in the white noise space which is not a law of composition. By embedding the white noise space into an analogous space of stochastic distributions the Wick product becomes a law of composition by strict inclusion. Another important reason such a construction of the space of stochastic distributions is the fact that such spaces are necessary for a future study of their derivatives (see \cite{MR3231624}).

Hence, we now recall a few facts from the classical case and from the theory of perfect spaces and strong algebras. For more details on these spaces we refer the reader to \cite{GS2_english, MR0435834}.\smallskip

Starting from a decreasing family of Hilbert spaces $(\mathcal H_p,\|\cdot\|_{\mathcal{H}_p})_{p\in\mathbb Z}$, with increasing norms,
$$ \ldots \subseteq \mathcal{H}_2 \subseteq \mathcal{H}_1\subseteq \mathcal{H}_0 \subseteq \mathcal{H}_{-1}\subseteq \mathcal{H}_{-2} \ldots$$ it is known that the intersection $\mathcal F=\cap_{p=0}^\infty \mathcal H_p$ is a Fr\'echet space. If, furthermore, $\mathcal F$ is perfect then compactness is equivalent to being compact and bounded. In particular, this is ensured when for each $p$ there exists $q>p$ such that the injection map from ${\mathcal H_q}$ into ${\mathcal H_p}$ is compact. %An important instance is when the injection is nuclear.
In the usual way we identify ${\mathcal H}_p^\prime$ with ${\mathcal H}_{-p}$. Then $\mathcal F$ together with its the dual $\mathcal F^\prime :=\cup_{p=0}^\infty{\mathcal H}_{-p}$ and $\mathcal H_0$ forms a Gel'fand triple $(\mathcal F, \Gamma(\mathcal H_0), \mathcal F^\prime)$. Indeed, the dual $\mathcal F^\prime$ endowed with the strong topology defined in terms of the bounded sets of $\mathcal F$ is then locally convex. Furthermore, the strong topology coincides with the inductive limit topology. See \cite[Section 3]{MR3029153} for a discussion.  Thus, the space of distributions $\mathcal F^\prime$ is the dual of a Fr\'echet nuclear space.

We recall here two statements about compactness and convergence of sequences in $\mathcal F^\prime$.

\begin{proposition}
\label{compact}\cite{GS2_english}
A set is (weakly or strongly) compact in $\mathcal F^\prime$ if and only if it is compact in one of the spaces ${\mathcal H}_{-p}$ in the corresponding norm.
\end{proposition}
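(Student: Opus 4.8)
The plan is to exploit the fact that, because the linking maps in the defining family are compact, $\mathcal{F}$ is a Fr\'echet--Schwartz space and its dual $\mathcal{F}'$ is a Silva (DFS) space, i.e.\ a regular inductive limit $\mathcal{F}'=\bigcup_{p}\mathcal{H}_{-p}$ whose connecting maps are again compact. Indeed, for $q>p$ the inclusion $\mathcal{H}_q\hookrightarrow\mathcal{H}_p$ is compact by hypothesis; passing to Hilbert-space adjoints and using the identifications $\mathcal{H}_p'=\mathcal{H}_{-p}$, the inclusion $\mathcal{H}_{-p}\hookrightarrow\mathcal{H}_{-q}$ is the adjoint of a compact operator between Hilbert spaces and hence compact. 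Such inductive limits are known to be regular and \emph{compactly retractive}: every bounded subset of $\mathcal{F}'$ is contained and bounded in a single step $\mathcal{H}_{-p}$, and on such a set the topology inherited from $\mathcal{F}'$ coincides with the norm topology of $\mathcal{H}_{-p}$. They are moreover Montel, hence reflexive. I would take these structural facts as the backbone of the argument.

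One implication is easy. Suppose $K$ is compact in $\mathcal{H}_{-p}$ for its norm. The canonical injection $\mathcal{H}_{-p}\to\mathcal{F}'$ is continuous for the strong (inductive-limit) topology, so the continuous image $K$ is strongly compact in $\mathcal{F}'$; since the weak topology $\sigma(\mathcal{F}',\mathcal{F})$ is coarser, $K$ is a fortiori weakly compact.

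For the converse I would first reduce the weak case to the strong one. A weakly compact $K$ is weakly bounded, hence bounded in $\mathcal{F}'$; by the Montel property its strong closure $\overline{K}$ is strongly compact, and on the strongly compact set $\overline{K}$ the strong topology --- being compact, Hausdorff, and finer than the weak one --- coincides with the weak topology, because a continuous bijection from a compact space onto a Hausdorff space is a homeomorphism. Thus $K$, being weakly closed, is strongly closed in $\overline{K}$ and therefore strongly compact, so it suffices to treat strongly compact $K$. Now let $K$ be strongly compact, hence strongly bounded. By regularity of the Silva space, $K\subseteq\mathcal{H}_{-p}$ for some $p$ with $K$ norm-bounded in $\mathcal{H}_{-p}$, and by compact retractivity the trace on $K$ of the $\mathcal{F}'$-topology equals the trace of the $\mathcal{H}_{-p}$-norm topology. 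Since $K$ is compact for the former, it is compact for the latter, i.e.\ norm-compact in $\mathcal{H}_{-p}$, which is the desired conclusion.

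The main obstacle is precisely the structural input used in the converse: the regularity and compact retractivity of the inductive limit $\mathcal{F}'=\varinjlim\mathcal{H}_{-p}$, namely the assertion that a bounded (here compact) set sits inside one step $\mathcal{H}_{-p}$ with the ambient and norm topologies agreeing on it. This property can fail for a general LB-space and uses in an essential way the \emph{compactness} of the connecting maps; it is the content of the Gel'fand--Shilov theory of perfect spaces, which I would invoke directly (cf.\ \cite{GS2_english}). Should one prefer a self-contained route, the regularity can be proved by a diagonal/contradiction argument: if $K$ met infinitely many steps in an essentially unbounded fashion, one selects $x_n\in K$ escaping every $\mathcal{H}_{-p}$-ball and constructs a continuous seminorm on $\mathcal{F}'$ --- built from a weak-$\ast$ convergent sequence of functionals furnished by the compact adjoints --- that is unbounded on $K$, contradicting the boundedness forced by compactness.
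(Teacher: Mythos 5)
The paper does not actually prove this proposition: it is quoted directly from \cite{GS2_english} (the Gel'fand--Shilov theory of perfect spaces), so there is no internal argument to compare against, and your proposal should be judged as a free-standing proof. As such it is correct, and it amounts to a modern repackaging of exactly the theory the paper cites: you observe that the compact injections $\mathcal{H}_q\hookrightarrow\mathcal{H}_p$ dualize (Schauder) to compact inclusions $\mathcal{H}_{-p}\hookrightarrow\mathcal{H}_{-q}$, so that $\mathcal{F}^\prime$ is a Silva (DFS) space, and you then draw the conclusion from regularity, compact retractivity, and the Montel property of such limits. The elementary portions of your argument are sound and well executed: the easy direction (continuous image of a norm-compact set under $\mathcal{H}_{-p}\to\mathcal{F}^\prime$), the reduction of weak to strong compactness via the Montel property together with the compact-Hausdorff homeomorphism argument, and the final identification of the limit topology with a step-norm topology on the compact set. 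Two caveats deserve record. First, the structural facts carrying the converse (regularity and retractivity of the limit) are precisely the content of \cite{GS2_english}, so your proof is no more self-contained than the paper's citation unless you flesh out the concluding diagonal argument, which as written is only a gesture; a cleaner self-contained route is to note that a bounded set $K\subseteq\mathcal{H}_{-p}$ has $\mathcal{H}_{-q}$-compact closure by compactness of $\mathcal{H}_{-p}\hookrightarrow\mathcal{H}_{-q}$, and on that closure the $\mathcal{H}_{-q}$-norm topology and the $\mathcal{F}^\prime$-topology already coincide by the compact-to-Hausdorff argument you used elsewhere. Second, your proof uses the compact-embedding hypothesis on the spaces $\mathcal{H}_p$, which the paper presents only as a sufficient condition for perfectness, whereas the Gel'fand--Shilov proposition is stated for arbitrary perfect spaces; this is harmless here, since for the spaces $\mathcal{H}_p(\mathbf{c})$ defined in \eqref{hp123} the condition \eqref{cond2} does furnish compact (indeed Hilbert--Schmidt) embeddings, but it means your argument proves the proposition in the paper's setting rather than in the full generality of the cited source.
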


\begin{proposition}\cite{GS2_english}
Assume $\mathcal F^\prime$ perfect. Then, weak and strong convergence of sequences are equivalent, and a sequence converges (weakly or strongly) if and only if it converges in one of the spaces ${\mathcal H}_{-p}$ in the corresponding norm.
\label{convergence}
\end{proposition}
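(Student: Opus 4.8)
The plan is to prove both assertions at once by closing a single cycle of implications: norm convergence in some $\mathcal{H}_{-p}$ implies strong convergence in $\mathcal{F}'$, which implies weak convergence in $\mathcal{F}'$, which in turn implies norm convergence in some $\mathcal{H}_{-p}$. The first implication comes from a duality estimate. If $f_n \to f$ in $\|\cdot\|_{\mathcal{H}_{-p}}$ and $B \subseteq \mathcal{F}$ is bounded, then $B$ is bounded in $\mathcal{H}_p$, so $M := \sup_{\phi \in B}\|\phi\|_{\mathcal{H}_p} < \infty$, and the pairing bound $\sup_{\phi \in B}|\langle f_n - f, \phi\rangle| \le M\,\|f_n - f\|_{\mathcal{H}_{-p}} \to 0$ gives uniform convergence on bounded sets, i.e. strong convergence in $\mathcal{F}'$. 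The second implication is immediate, since the strong topology is finer than the weak one.

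The substance lies in the third implication. Suppose $\langle f_n, \phi\rangle \to \langle f, \phi\rangle$ for every $\phi \in \mathcal{F}$. Since $\mathcal{F}$ is a Fr\'echet, hence barrelled, space, the Banach--Steinhaus theorem makes $\{f_n\}$ equicontinuous and therefore bounded in $\mathcal{F}'$. By the hypothesis that $\mathcal{F}'$ is perfect, bounded sets are relatively compact, so the closure $K$ of $\{f_n\} \cup \{f\}$ is compact in $\mathcal{F}'$. Proposition \ref{compact} then places $K$ inside a single space $\mathcal{H}_{-p}$, where it is compact in the $\|\cdot\|_{\mathcal{H}_{-p}}$-norm; in particular the whole sequence lives in a norm-compact subset of one Hilbert space $\mathcal{H}_{-p}$.

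It remains to upgrade weak to norm convergence inside $\mathcal{H}_{-p}$, and this is where the main obstacle sits, since weak convergence never implies norm convergence in an infinite-dimensional Hilbert space on its own. Here the norm-compactness rescues the argument: given any subsequence of $(f_n)$, compactness in $\mathcal{H}_{-p}$ yields a further subsequence converging in $\|\cdot\|_{\mathcal{H}_{-p}}$ to some $h \in K$; by the first implication this sub-subsequence also converges weakly to $h$, and since weak limits are unique (as $\mathcal{F}$ separates the points of $\mathcal{F}'$) we must have $h = f$. Thus every subsequence admits a further subsequence converging in norm to the single limit $f$, which forces $f_n \to f$ in $\|\cdot\|_{\mathcal{H}_{-p}}$. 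This closes the cycle and shows the four notions coincide, the perfectness hypothesis entering precisely through Proposition \ref{compact} to confine the sequence to a norm-compact subset of a fixed $\mathcal{H}_{-p}$, where weak convergence and compactness together force norm convergence.
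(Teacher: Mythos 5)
The paper never proves this proposition: it is quoted directly from Gel'fand--Shilov \cite{GS2_english} as background on perfect spaces, so there is no internal argument to compare yours against. Your reconstruction is correct, and it is essentially the classical proof from the theory of duals of perfect countably Hilbert spaces. The cyclic structure (norm convergence in some $\mathcal H_{-p}$ $\Rightarrow$ strong $\Rightarrow$ weak $\Rightarrow$ norm convergence in some $\mathcal H_{-p}$) is the right way to get all the stated equivalences at once; the duality estimate $|\langle f_n-f,\phi\rangle|\le \|f_n-f\|_{\mathcal H_{-p}}\,\|\phi\|_{\mathcal H_p}$ together with the fact that bounded subsets of the Fr\'echet space $\mathcal F$ are bounded in each $\mathcal H_p$ gives the first arrow; Banach--Steinhaus applies because Fr\'echet spaces are barrelled, so the weakly convergent sequence is equicontinuous, hence strongly bounded; perfectness then makes its closure compact, and Proposition \ref{compact} confines that compact set to a single $\mathcal H_{-p}$ with its norm topology; finally the subsequence--uniqueness trick (every subsequence has a further subsequence norm-converging to some $h$, which must equal $f$ by Hausdorffness of the weak topology) is exactly what is needed to bridge the genuine obstacle you correctly identify, namely that weak convergence alone never yields norm convergence in an infinite-dimensional Hilbert space. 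Two remarks: your argument is conditional on Proposition \ref{compact}, which the paper likewise only cites, so this is a legitimate reduction within the paper's own logical scaffolding rather than a fully self-contained proof; and you have placed the perfectness hypothesis precisely where it is indispensable --- without bounded-implies-relatively-compact in $\mathcal F^\prime$, the chain from weak convergence to norm convergence breaks down.
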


Now, we are going to show that $\mathcal F$ can be made a topological algebra denoted $\mathfrak{S}_{1}$ where the product satisfies the so-called V\r{a}ge inequality. This ensures that we can consider $\mathcal F^\prime$ as an inductive can be done as limit of Hilbert spaces.  This is used in the proof of Theorem \ref{thm55}.

A topological algebra is assumed to be separately continuous in each variable. It is not immediate, but true that a strong algebra is jointly continuous in the two variables (see \cite[IV.26, Theorem 2]{Bourbaki81EVT} and also the discussion in \cite[pp. 215-216]{MR3404695}).

In our case, we define
\begin{equation}
\label{hp123}
\mathcal{H}_{p}(\mathbf{c}) = \left\{f = \sum_{\bnu \in \cI} f_{\bnu} \e^{\bnu} \in \comp{2} \suchthat \sum_{\bnu \in \cI} | f_{\bnu} |^2 c_{\bnu}^{2p}<\infty \right\},
\end{equation}
with $p\in\mathbb{Z}$. The coefficients give rise a sequence $\mathbf{c} = (c_{\bnu})_{\bnu \in \cI}$ of positive real numbers such that
\begin{equation}
c_{\bnu} c_{\bmu} \leq c_{\bgamma}, \quad \mbox{for  all }\bnu, \bmu \in \cI \mbox{ such that } \bnu+\bmu=\bgamma \in \cI,
\label{cond1}
\end{equation}
and where
\begin{equation}
\sum_{\bnu \in \cI} c_{\bnu}^{-2d} < \infty,\quad \mbox{for } d=1, 2, 3, \ldots
\label{cond2}
\end{equation}
By construction we have
$$\mathcal{H}_{-q}(\mathbf{c}) \subseteq \mathcal{H}_{-p}(\mathbf{c}),$$
if $p\geq q$.

From here on we abbreviate $\mathcal{H}_{-p}(\mathbf{c})$ by $\mathcal{H}_{-p}$.

\begin{definition}
The norm $\| \cdot \|_{\mathcal{H}_{-p}}$ in $\mathcal{H}_{-p}$ is defined as
$$
\| f \|_{\mathcal{H}_{-p}} := \sum_{\bnu \in \cI} | f_{\bnu} |^2 c_{\bnu}^{-2p}.
$$
\end{definition}

\begin{proposition}
If $c_{\bnu} c_{\bmu} = c_{\bnu + \bmu}$, then $c_{\mathbf{0}}=1$.
\label{c0}
\end{proposition}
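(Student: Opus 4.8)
The plan is to derive the conclusion directly from the hypothesis by choosing the most economical special case of the multiplicativity assumption. The hypothesis states that $c_{\bnu} c_{\bmu} = c_{\bnu + \bmu}$ holds for all admissible index pairs, so I would simply specialize both indices to the zero multi-index $\mathbf{0} = (0,0,\ldots)$. Since $\mathbf{0} \in \cI$ and $\mathbf{0} + \mathbf{0} = \mathbf{0} \in \cI$, the pair $(\mathbf{0}, \mathbf{0})$ is admissible, and the hypothesis yields
\begin{equation*}
c_{\mathbf{0}} \, c_{\mathbf{0}} = c_{\mathbf{0} + \mathbf{0}} = c_{\mathbf{0}},
\end{equation*}
that is, $c_{\mathbf{0}}^2 = c_{\mathbf{0}}$.

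From here I would finish by using the standing assumption, recorded in the definition of the sequence $\mathbf{c} = (c_{\bnu})_{\bnu \in \cI}$, that all its entries are \emph{positive} real numbers. In particular $c_{\mathbf{0}} > 0$, so $c_{\mathbf{0}} \neq 0$ and I may divide the relation $c_{\mathbf{0}}^2 = c_{\mathbf{0}}$ through by $c_{\mathbf{0}}$ to conclude $c_{\mathbf{0}} = 1$. The positivity is what rules out the spurious root $c_{\mathbf{0}} = 0$ of the equation $c_{\mathbf{0}}^2 = c_{\mathbf{0}}$; without it the statement would fail, so it is essential to invoke it explicitly.

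There is essentially no obstacle here: the only point requiring any care is confirming that the pair $(\mathbf{0}, \mathbf{0})$ genuinely falls under the scope of the quantifier "for all $\bnu, \bmu \in \cI$ such that $\bnu + \bmu = \bgamma \in \cI$" in condition \eqref{cond1}, which it does since the zero sequence lies in $\cI = \{0,1,2\}^{\BN}$ and is closed under the relevant addition. Thus the proof amounts to one substitution followed by cancellation of a nonzero factor.
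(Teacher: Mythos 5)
Your proof is correct and essentially the same as the paper's: the paper substitutes $\bnu=\mathbf{0}$ with $\bmu$ arbitrary to get $c_{\mathbf{0}}c_{\bmu}=c_{\bmu}$ and cancels the positive factor $c_{\bmu}$, while you substitute $\bnu=\bmu=\mathbf{0}$ to get $c_{\mathbf{0}}^2=c_{\mathbf{0}}$ and cancel the positive factor $c_{\mathbf{0}}$. Both are one substitution into the multiplicativity hypothesis followed by cancellation justified by the standing positivity assumption on the sequence $\mathbf{c}$, which you are right to flag explicitly (the paper leaves it implicit).
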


\begin{proof} If so, then
$$c_{\mathbf{0}} c_{\bmu} = c_{\mathbf{0} + \bmu} = c_{\bmu} $$ so that $c_{\mathbf{0}}=1.$
\end{proof}

\begin{proposition} Let $\mathbf{c} = (c_{\bnu})_{\bnu \in \cI}$ be such that $c_{\mathbf{0}}=1$ and $c_{\bnu} >1,$ for all $\bnu \not= \mathbf{0}.$ Then,
$$\lim_{p\rightarrow\infty}\left\Vert f\right\Vert_{\mathcal{H}_{-p}} = |f_{\mathbf{0}}|^2, \quad \mbox{for all }f\in\mathcal{H}_{-p}.$$
\label{limit-norm}
\end{proposition}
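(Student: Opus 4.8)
The plan is to split the defining series for $\|f\|_{\mathcal{H}_{-p}}$ into the scalar term indexed by $\bnu = \mathbf{0}$ and the remaining tail, and then to show that the tail vanishes as $p\to\infty$. Concretely, since $c_{\mathbf{0}}=1$ we have $c_{\mathbf{0}}^{-2p}=1$ for every $p$, so the $\bnu=\mathbf{0}$ contribution is exactly $|f_{\mathbf{0}}|^2$ independently of $p$. Writing $\|f\|_{\mathcal{H}_{-p}} = |f_{\mathbf{0}}|^2 + \sum_{\bnu\neq\mathbf{0}} |f_{\bnu}|^2 c_{\bnu}^{-2p}$, it then remains to prove that the second sum tends to $0$.

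For the tail I would argue term by term first: for each fixed $\bnu\neq\mathbf{0}$ the hypothesis $c_{\bnu}>1$ gives $c_{\bnu}^{-2p}\to 0$ as $p\to\infty$, so every individual summand converges to $0$. The substance of the proof is therefore the interchange of the limit with the infinite summation, and this is the step I expect to be the main obstacle. I would justify it by dominated convergence for series: fix $p_{0}$ with $f\in\mathcal{H}_{-p_{0}}$ (which exists by hypothesis, and then $f\in\mathcal{H}_{-p}$ for all $p\geq p_{0}$ by the inclusion $\mathcal{H}_{-p_{0}}\subseteq\mathcal{H}_{-p}$). Because $c_{\bnu}\geq 1$ for all $\bnu$, for every $p\geq p_{0}$ and every $\bnu$ we have $c_{\bnu}^{-2p}\leq c_{\bnu}^{-2p_{0}}$, hence $|f_{\bnu}|^2 c_{\bnu}^{-2p}\leq |f_{\bnu}|^2 c_{\bnu}^{-2p_{0}}$. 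The majorant $\sum_{\bnu} |f_{\bnu}|^2 c_{\bnu}^{-2p_{0}} = \|f\|_{\mathcal{H}_{-p_{0}}}$ is finite precisely because $f\in\mathcal{H}_{-p_{0}}$, which is exactly the $p$-independent summable dominating sequence needed.

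With the domination in place, dominated convergence lets me pass the limit inside, giving $\lim_{p\to\infty}\sum_{\bnu\neq\mathbf{0}} |f_{\bnu}|^2 c_{\bnu}^{-2p} = \sum_{\bnu\neq\mathbf{0}} |f_{\bnu}|^2 \lim_{p\to\infty} c_{\bnu}^{-2p} = 0$, and combining with the constant scalar term yields $\lim_{p\to\infty}\|f\|_{\mathcal{H}_{-p}} = |f_{\mathbf{0}}|^2$. An equally valid alternative, avoiding an explicit convergence theorem, is to note that for each fixed $\bnu\neq\mathbf{0}$ the sequence $p\mapsto c_{\bnu}^{-2p}$ decreases monotonically, so the tail sums decrease in $p$ and are bounded above by the finite value $\|f\|_{\mathcal{H}_{-p_{0}}}$; one then chooses a finite index set carrying all but $\varepsilon$ of the mass at level $p_{0}$, drives that finite part below $\varepsilon$ by taking $p$ large, and controls the remainder uniformly by $\varepsilon$. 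Either route reduces the whole argument to the same single point — producing a $p$-independent summable bound — so the only genuine care needed is ensuring that $f$ actually lies in some $\mathcal{H}_{-p_{0}}$ so that such a bound exists.
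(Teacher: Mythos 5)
Your proof is correct and follows essentially the same route as the paper's: pass to the limit term by term in the defining series and interchange the limit with the summation. Yours is in fact more complete, since the paper performs this interchange without comment, whereas you supply the justification — the $p$-independent dominating sequence $|f_{\bnu}|^2 c_{\bnu}^{-2p_0}$, valid once $f\in\mathcal{H}_{-p_0}$ and $c_{\bnu}\geq 1$ — which is exactly the step that needs care.
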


\begin{proof}
Since $\lim_{p\rightarrow\infty} c_{\bnu}^{-2p}=0$ for every $\bnu \not= \mathbf{0}$,
\begin{eqnarray*}
\lim_{p\rightarrow\infty}\| f \|_{\mathcal{H}_{-p}} & = & \lim_{p\rightarrow\infty}\sum_{\bnu \in \cI} |f_{\bnu} |^2 c_{\bnu}^{-2p} \\
    & = & \sum_{\bnu \in \cI} | f_{\bnu}|^2 \left( \lim_{p \rightarrow\infty} c_{\bnu}^{-2p} \right) \\
    & = & | f_{\mathbf{0}}|^2.
\end{eqnarray*}
\end{proof}

\begin{definition}
We consider the space
\begin{equation} \label{S_1}
\mathfrak{S}_1 = \cap_{p \geq 0}%\in\mathbb{Z}}
\mathcal{H}_p
\end{equation}
and its topological dual
\begin{equation} \label{S_1dual}
\mathfrak{S}_{-1}=\cup_{p \geq 0}% \in\mathbb{Z}},
 \mathcal{H}_{-p},
\end{equation}
which can be considered as analogues of the spaces $\cS$ and $\cS^\prime$, respectively, in our setting.
\end{definition}

The next theorem introduces a V\r{a}ge-like inequality \cite{vage96} which permits the analysis of stochastic processes to be done locally in a Hilbert space.

\begin{theorem}
If $f\in\mathcal{H}_{-q}$ and $g\in\mathcal{H}_{-p}$, with $p> q$, then
\begin{equation}
\label{vage_ineq}
\left\| fg \right\|_{\mathcal{H}_{-p}} \leq C_{p-q} \left\Vert f\right\Vert_{\mathcal{H}_{-q}} \left\Vert g\right\Vert_{\mathcal{H}_{-p}}, \quad \left\| gf \right\|_{\mathcal{H}_{-p}} \leq C_{p-q} \left\Vert f\right\Vert_{\mathcal{H}_{-q}} \left\Vert g\right\Vert_{\mathcal{H}_{-p}},
\end{equation}
with $C_{p-q}>0$ being a constant.
\label{ineq}
\end{theorem}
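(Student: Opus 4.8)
The plan is to derive \eqref{vage_ineq} directly from the coefficientwise product formula \eqref{Eq:3:InfinitProduct} and the two structural hypotheses \eqref{cond1} and \eqref{cond2} on the weight sequence $\mathbf{c}$, using the Cauchy--Schwarz inequality exactly once. It is worth recording at the outset that, in the paper's convention, $\|\cdot\|_{\mathcal{H}_{-p}}$ is the \emph{squared} weighted $\ell^2$-quantity $\sum_{\bnu}|f_{\bnu}|^2 c_{\bnu}^{-2p}$; consequently the estimate we must produce is an inequality between sums of squares, and no final square root will be needed.

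First I would read off the $\bgamma$-th coefficient of the product as $(fg)_{\bgamma} = \sum_{\bnu+\bmu=\bgamma}\sigma(\bnu,\bmu) f_{\bnu} g_{\bmu}$, the sum ranging over pairs with $\bnu+\bmu=\bgamma\in\cI$. Since $|\omega|=1$ gives $|\sigma(\bnu,\bmu)|=1$, the triangle inequality yields $|(fg)_{\bgamma}|\le \sum_{\bnu+\bmu=\bgamma}|f_{\bnu}||g_{\bmu}|$. I then bring in the weights: hypothesis \eqref{cond1} gives $c_{\bgamma}^{-p}\le c_{\bnu}^{-p}c_{\bmu}^{-p}$ for each admissible pair, so that
\[
c_{\bgamma}^{-p}|(fg)_{\bgamma}| \le \sum_{\bnu+\bmu=\bgamma} c_{\bnu}^{-p}c_{\bmu}^{-p}|f_{\bnu}||g_{\bmu}|.
\]
The decisive step is to split the weight attached to the $f$-factor as $c_{\bnu}^{-p}=c_{\bnu}^{-q}\,c_{\bnu}^{-(p-q)}$ and to apply Cauchy--Schwarz to this inner sum, pairing $c_{\bnu}^{-(p-q)}$ against $(c_{\bnu}^{-q}|f_{\bnu}|)(c_{\bmu}^{-p}|g_{\bmu}|)$. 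Squaring the previous line then gives
\[
c_{\bgamma}^{-2p}|(fg)_{\bgamma}|^2 \le \Big(\sum_{\bnu+\bmu=\bgamma} c_{\bnu}^{-2(p-q)}\Big)\Big(\sum_{\bnu+\bmu=\bgamma} c_{\bnu}^{-2q}|f_{\bnu}|^2\, c_{\bmu}^{-2p}|g_{\bmu}|^2\Big).
\]
The first factor is dominated, uniformly in $\bgamma$, by $C_{p-q}:=\sum_{\bnu\in\cI}c_{\bnu}^{-2(p-q)}$, which is finite by \eqref{cond2} because $p-q$ is a positive integer. Summing over $\bgamma$ and noting that $\sum_{\bgamma}\sum_{\bnu+\bmu=\bgamma}$ is majorized by the free double sum over $\cI\times\cI$, the second factor factorizes as $\|f\|_{\mathcal{H}_{-q}}\|g\|_{\mathcal{H}_{-p}}$, which is precisely the first inequality in \eqref{vage_ineq}.

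The main obstacle --- really the only point requiring care --- is to arrange the Cauchy--Schwarz split so that the surplus weight $c_{\bnu}^{-2(p-q)}$ separates off as a constant independent of $\bgamma$. This forces the surplus to be extracted from the factor controlled in the sharper norm, namely $f\in\mathcal{H}_{-q}$: only $f$ is measured at the finer level $q$, so only on the $\bnu$-index can we afford to lower the exponent from $p$ to $q$, and it is exactly the hypothesis $p>q$ that makes $p-q\ge 1$ an integer to which \eqref{cond2} applies. The second inequality in \eqref{vage_ineq}, for $gf$, is proved verbatim: the product $gf$ has coefficients $\sum_{\bnu+\bmu=\bgamma}\sigma(\bmu,\bnu)g_{\bmu}f_{\bnu}$, and since $|\sigma(\bmu,\bnu)|=1$ as well, the absolute-value estimates are identical, the surplus again being taken from the $f$-factor measured in $\mathcal{H}_{-q}$. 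This argument parallels the classical V\r{a}ge inequality for Kondratiev spaces and the computation in \cite{alpay2018distribution}.
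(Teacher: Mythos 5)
Your proof is correct and is essentially the paper's own argument: both run on the same three ingredients --- the submultiplicativity \eqref{cond1} giving $c_{\bnu+\bmu}^{-p}\le c_{\bnu}^{-p}c_{\bmu}^{-p}$, the split $c_{\bnu}^{-p}=c_{\bnu}^{-q}c_{\bnu}^{-(p-q)}$ that places the surplus weight on the factor measured in $\mathcal{H}_{-q}$, and Cauchy--Schwarz combined with \eqref{cond2} to produce the constant $C_{p-q}=\sum_{\bnu\in\cI}c_{\bnu}^{-2(p-q)}$. The differences are purely organizational (you apply Cauchy--Schwarz once per coefficient $\bgamma$ and then sum, while the paper expands $|(fg)_{\bgamma}|^2$ into a quadruple sum, applies Cauchy--Schwarz first to the $g$-factors and then to the $f$-sum, and additionally digresses to exhibit an explicit admissible sequence $\mathbf{c}$, which your proof rightly treats as a standing hypothesis rather than something to be re-proved); moreover your handling of the second inequality --- rerunning the estimate using $|\sigma(\bmu,\bnu)|=1$ rather than asserting $\left\| fg \right\|_{\mathcal{H}_{-p}}=\left\| gf \right\|_{\mathcal{H}_{-p}}$ --- is, if anything, the more careful of the two.
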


\begin{proof} Suppose $f\in\mathcal{H}_{-q}$ and $g\in\mathcal{H}_{-p}$. Applying Cauchy-Schwarz inequality we get
\begin{eqnarray*}
\left\| fg \right\|_{\mathcal{H}_{-p}}^2 & = & \sum_{\bgamma\in\cI} |(fg)_{\bgamma}|^2 c_{\bgamma}^{-2p} \\
     & = & \sum_{\bgamma\in\cI} \left| \sum_{{\bnu, \bmu \in \cI} \atop {\bnu+\bmu = \bgamma}}\sigma(\bnu, \bmu)f_{\bnu} g_{\bmu} \right|^2 c_{\bgamma}^{-2p} \\
     & \leq & \sum_{\bgamma\in\cI} \left( \sum_{{\bnu, \bmu, \bnu', \bmu' \in \cI} \atop {\bnu+\bmu = \bnu'+\bmu' = \bgamma}} |f_{\bnu} |  |g_{\bmu} | | f_{\bnu'} |  |g_{\bmu'} | \right)  c_{\bgamma}^{-2p} \\
     & \leq & \sum_{\bgamma\in\cI} \left( \sum_{{\bnu, \bmu, \bnu', \bmu' \in \cI} \atop {\bnu+\bmu = \bnu'+\bmu' = \bgamma}} | f_{\bnu} | c_{\bnu}^{-p}  |g_{\bmu} |c_{\bmu}^{-p}  | f_{\bnu'} | c_{\bnu'}^{-p} |g_{\bmu'} |  c_{\bmu'}^{-p} \right)  \\
      & \leq & \sum_{\bnu, \bnu' \in\cI} | f_{\bnu} | c_{\bnu}^{-p}  | f_{\bnu'} | c_{\bnu'}^{-p} \left( \sum_{{\bgamma\in\cI : \exists \bmu, \bmu' \in \cI} \atop {\bnu+\bmu = \bnu'+\bmu' = \bgamma}}   |g_{\bmu} |c_{\bmu}^{-p}   |g_{\bmu'} |  c_{\bmu'}^{-p} \right)  \\
          & \leq & \sum_{\bnu, \bnu' \in\cI} | f_{\bnu} | c_{\bnu}^{-p}  | f_{\bnu'} | c_{\bnu'}^{-p} \left( \sum_{{\bgamma\in\cI : \exists \bmu \in \cI} \atop {\bnu+\bmu = \bgamma}}   |g_{\bmu} |^2 c_{\bmu}^{-2p}   \right)^{\frac{1}{2}}  \left( \sum_{{\bgamma\in\cI : \exists \bmu' \in \cI} \atop {\bnu'+\bmu' = \bgamma}}   |g_{\bmu'} |^2  c_{\bmu'}^{-2p} \right)^{\frac{1}{2}}  \\
          & \leq & \sum_{\bnu, \bnu' \in\cI} | f_{\bnu} | c_{\bnu}^{-p}  | f_{\bnu'} | c_{\bnu'}^{-p} \left( \sum_{\bmu\in\cI }   |g_{\bmu} |^2c_{\bmu}^{-2p}   \right)^{\frac{1}{2}}  \left(  \sum_{\bmu' \in\cI }    |g_{\bmu'} |^2  c_{\bmu'}^{-2p} \right)^{\frac{1}{2}}  \\
          & \leq & \left( \sum_{\bnu  \in\cI} | f_{\bnu} | c_{\bnu}^{-p}   \right)^2 \| g \|^2_{\cH_{-p}}  \\
         & \leq & \left( \sum_{\bnu  \in\cI} | f_{\bnu} | c_{\bnu}^{-q}c_{\bnu}^{-(p-q)}   \right)^2 \| g \|^2_{\cH_{-p}}  \\
    & \leq & \left( \sum_{\bnu  \in\cI} c_{\bnu}^{-2(p-q)}   \right)  \| f \|_{\cH_{-q}}^2  \| g \|^2_{\cH_{-p}}.
\end{eqnarray*}

It remains to prove that there exists a sequence $\mathbf{c} = (c_{\bnu})$  such that $\sum_{\bnu \in \cI} c_{\bnu}^{-2d}<\infty$ for all $d=1,2, \ldots$. We assume this sequence to be given by
$$c_{\bnu} = e^{ \sum_k \varphi(3^{k-1} \nu_k)} = e^{\varphi(\nu_1) + \varphi(3 \nu_2) + \varphi(3^2 \nu_3) + \cdots },$$
where $ \nu_k \in \{ 0,1, 2\}.$ We now look into the properties of such a function $\varphi.$

Bering in mind that $c_{\mathbf{0}}=1$ and $c_{\bnu} c_{\bmu} = c_{\bnu+\bmu}$ for all $\bnu, \bmu \in \cI$ such that $\bnu+\bmu \in \cI,$ we obtain
\begin{enumerate}[i)]
\item $c_{\mathbf{0}} = e^{\sum_k \varphi(0)},$ leading to $\varphi(0)=0;$
\item since $c_{\bnu} c_{\bmu} = c_{\bnu+\bmu}$ if $\bnu+\bmu \in \cI,$ we have that for a given position $k \in \BN,$  $\nu_k +\mu_k \not=3, 4;$
\item $c_{\bnu} c_{\bmu} = c_{\bnu+\bmu}$ implies
$$e^{ \sum_k \varphi(3^{k-1} \nu_k)} e^{ \sum_j \varphi(3^{j-1} \mu_j)}  = e^{\sum_k \varphi \big(3^{k-1} (\nu_k+\mu_k)\big)};$$
\item furthermore, for integers $k > j$ and $\nu_k, \mu_j \in \{ 1, 2 \}$ we have $3^{k-1} \nu_k > 3^{j-1} \mu_j$ so that $\varphi$ should be an increasing function, satisfying to $\varphi(a) + \varphi(b) = \varphi(a+b),$ for $a, b >0.$
\end{enumerate}

For example, consider $\varphi(x) = x, ~x>0.$ Hence,
\begin{eqnarray*}
\sum_{\bnu  \in\cI} c_{\bnu}^{-2d} & = & 1 +  \sum_{{\bnu  \in\cI} \atop {|\bnu \not= \mathbf{0}}} c_{\bnu}^{-2d} \\
& = & 1 +  \sum_{{\bnu  \in\cI} \atop {|\bnu| \not= \mathbf{0}}} e^{ -2d \sum_k 3^{k-1} \nu_k}
\end{eqnarray*} Now, we split this sum in terms of the number $\# \bnu$ of non-zero entries in the sequence  $\bnu.$ Then
\begin{eqnarray*}
\sum_{\bnu  \in\cI} c_{\bnu}^{-2d} & = & 1 + \sum_{m=1}^{\infty} \sum_{{\bnu  \in\cI} \atop {\# \bnu =m}} e^{-2d \sum_k  3^{k-1} \nu_k}.
\end{eqnarray*} We observe that, for $m=1$ we have
\begin{eqnarray*}
 \sum_{{\bnu  \in\cI} \atop {\# \bnu =1}} e^{ -2d \sum_k  3^{k-1} \nu_k} & =&  \sum_{k=1}^{\infty} e^{ -2d 3^{k-1} \nu_k} \leq \sum_{k=1}^{\infty} e^{ -2 d 3^{k-1}} \qquad (\mbox{recall: } \nu_k = 1, 2)\\
 & \leq &  \sum_{k=1}^{\infty} e^{ -2d k} = e^{-2d} \frac{1}{1-e^{-2d}}< \infty.
\end{eqnarray*} Furthermore, remark that $0< e^{-2d} < 1, ~d=1,2, \ldots$ so that $ \frac{1}{1-e^{-2d}} >1.$ Hence, we have
\begin{eqnarray*}
e^{-2d} \frac{1}{1-e^{-2d}}< 1 & \Leftrightarrow & 2e^{-2d} < 1 \\
& \Leftrightarrow & -2d < -\ln 2 \\
& \Leftrightarrow & d >  \ln \sqrt 2.
\end{eqnarray*}

Now, for $\# \bnu = m >1$ we obtain
\begin{eqnarray*}
\sum_{{\bnu  \in\cI} \atop {\# \bnu =m}} e^{-2d \sum_k  3^{k-1} \nu_k} & = & \sum_{{\bnu  \in\cI} \atop {\# \bnu =m}} e^{-2d    \nu_1} e^{-2d 3\nu_2} \cdots e^{-2d   3^{m-1} \nu_m} \\
& \leq & \left(  \sum_{k=1}^{\infty} e^{ -2d k} \right)^m =  \left(  \frac{e^{-2d}}{1-e^{-2d}} \right)^m, \quad m=1,2, 3, \ldots
\end{eqnarray*} so that
\begin{eqnarray*}
\sum_{\bnu  \in\cI} c_{\bnu}^{-2d} & = & 1 + \sum_{m=1}^{\infty} \sum_{{\bnu  \in\cI} \atop {\# \bnu =m}} e^{-2d \sum_k  3^{k-1} \nu_k} \\
& \leq & 1 + \sum_{m=1}^{\infty} \left(  \frac{e^{-2d}}{1-e^{-2d}} \right)^m \\
& = & \frac{1-e^{-2d}}{1-2e^{-2d}}.
\end{eqnarray*}

Finally, we remark that although none of the Banach algebras $\cH_{-p}$ is commutative the second inequality in (\ref{vage_ineq}) holds with the same value of constant $C_{p-q}$. Indeed, due to the multiplication rules (\ref{Eq:2:008}) and (\ref{Eq:2:008c}) we have that $f_{\bnu} g_{\bmu}\sigma(\bnu, \bmu) \e^{\bnu + \bmu} = g_{\bmu} f_{\bnu} \sigma(\bmu, \bnu)  \e^{\bmu + \bnu},$ so that $\left\| fg \right\|_{\mathcal{H}_{-p}} = \left\| gf \right\|_{\mathcal{H}_{-p}}.$ \end{proof}

\begin{proposition}
The space $\mathfrak{S}_{-1}$ equipped with the product induced by the coefficients is a strong algebra.
\label{strong}
\end{proposition}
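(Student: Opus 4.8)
The plan is to verify the defining features of a strong algebra one by one, exploiting that the one genuinely analytic ingredient — the V\r{a}ge inequality — is already in hand as Theorem \ref{ineq}. Following \cite{GS2_english, vage96}, to call $\mathfrak{S}_{-1}$ a strong algebra I must confirm three things: (a) that $\mathfrak{S}_{-1}$ is the strong dual of a perfect Fr\'echet nuclear space, realized as the inductive limit of the increasing chain of Hilbert spaces $(\mathcal{H}_{-p})_{p\geq 0}$; (b) that the product \eqref{Eq:3:InfinitProduct} inherited from $\cG_3$ is an associative bilinear operation sending $\mathfrak{S}_{-1}\times\mathfrak{S}_{-1}$ into $\mathfrak{S}_{-1}$; and (c) that this product satisfies the V\r{a}ge inequality, which forces its continuity in the inductive-limit topology.

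First I would settle (a). The point is to show that for every $p$ there is $q>p$ for which the inclusion $\mathcal{H}_q\hookrightarrow\mathcal{H}_p$ is Hilbert--Schmidt, hence compact, so that $\mathfrak{S}_1=\cap_p\mathcal{H}_p$ is nuclear and perfect. Relative to the orthogonal families $\{c_{\bnu}^{-p}\e^{\bnu}\}_{\bnu\in\cI}$ this inclusion acts diagonally with entries $c_{\bnu}^{-(q-p)}$, so its Hilbert--Schmidt norm is governed by $\sum_{\bnu\in\cI}c_{\bnu}^{-2(q-p)}$, which is finite by \eqref{cond2} taken with $d=q-p$. This gives $\mathfrak{S}_1$ the structure of a perfect Fr\'echet nuclear space, and then by the general theory recalled before the statement, together with Propositions \ref{compact} and \ref{convergence}, the dual $\mathfrak{S}_{-1}$ carries the strong topology, which coincides with the inductive-limit topology of the $\mathcal{H}_{-p}$.

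Next I would dispose of (b). Associativity and bilinearity are simply inherited from the associative algebra $\cG_3$, the product being computed coordinatewise through the cocycle $\sigma(\bnu,\bmu)$ in \eqref{Eq:3:InfinitProduct}; no new computation is needed beyond the associativity of $\cG_3$ already recorded in Section \ref{sec-completions}. That the product stays inside $\mathfrak{S}_{-1}$ is an immediate corollary of (c): given $f\in\mathcal{H}_{-q}$ and $g\in\mathcal{H}_{-p}$ with, say, $p\geq q$, Theorem \ref{ineq} yields $\|fg\|_{\mathcal{H}_{-p}}<\infty$, so $fg\in\mathcal{H}_{-p}\subseteq\mathfrak{S}_{-1}$; since every element of $\mathfrak{S}_{-1}$ lies in some $\mathcal{H}_{-p}$, taking the larger of two indices shows the product is internal.

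Finally, (c) is exactly the content of Theorem \ref{ineq}, including the symmetric estimate for $gf$ with the same constant $C_{p-q}$. Combined with the inductive-limit structure from (a), the V\r{a}ge inequality makes multiplication separately continuous on $\mathfrak{S}_{-1}$, and the upgrade to joint continuity follows from the cited result \cite[IV.26, Theorem 2]{Bourbaki81EVT}. The step I expect to demand the most care is (a): justifying that the inclusions are Hilbert--Schmidt and hence that $\mathfrak{S}_1$ is perfect and nuclear, so that the strong dual topology on $\mathfrak{S}_{-1}$ genuinely agrees with the inductive-limit topology in which the V\r{a}ge inequality expresses continuity. Everything else reduces to invoking Theorem \ref{ineq} and the associativity of $\cG_3$.
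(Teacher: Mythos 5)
Your proposal is correct and follows essentially the same route as the paper: the V\r{a}ge inequality of Theorem \ref{ineq} supplies (separate) continuity of the product in the inductive-limit topology of the spaces $\mathcal{H}_{-p}$, associativity is inherited from $\cG_3$, and the strong-algebra structure is read off from the inductive limit of the Hilbert spaces, with joint continuity deferred to the cited general results. The only difference is that you explicitly verify perfectness/nuclearity of $\mathfrak{S}_1$ via the Hilbert--Schmidt embedding argument based on \eqref{cond2} (a correct computation: the inclusion $\mathcal{H}_q\hookrightarrow\mathcal{H}_p$ is diagonal with entries $c_{\bnu}^{-(q-p)}$), whereas the paper handles this point by citation after its proof; this is a useful supplement rather than a different approach.
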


\begin{proof}
Let us start by endowing $\mathfrak{S}_{-1}$ with the inductive topology. From Theorem~\ref{ineq} we get that the product of the algebra is separately continuous in every space $\mathcal{H}_{-p}$ which is the same as continuity in the inductive topology. Additionally, the product in $\mathfrak{S}_{-1}$ inherits associativity from our ternary Grassmann algebra $\cG_3$. Therefore, $\mathfrak{S}_{-1}$ has a Banach algebra structure and, thus, we can consider it as the inductive limit of Banach spaces, which makes it a strong algebra. \end{proof}

%Therefore, we have a Banach algebra structure. Thus, $\mathfrak{S}_{-1}$ can be seen as the inductive limit of Banach spaces, which makes it a strong algebra.

For more details on this proof, see \cite{MR3404695}. This also shows that the inductive topology is equivalent to the strong topology in $\mathfrak{S}_{-1}$. Furthermore, the product will also be
associative in $\mathfrak{S}_{-1}$ and the multiplication is jointly continuous (\cite{2013arXiv1302.3372A}, p. 215, case (iv), and also \cite{Bourbaki81EVT}, IV.23, Proposition 4).

Then by \cite[Theorem 3.7]{MR3029153} we have  $\mathfrak{S}_{-1}$ being nuclear, and the dual of a perfect space.

\begin{corollary} \label{f-conv}
Suppose $n\in\mathbb{N}$ and $f\in\mathcal{H}_{-p}\subseteq\mathcal{H}_{-p-2}$. Then,
\[
\left\Vert f^n\right\Vert_{\mathcal{H}_{-p-2}} \leq C_2^{n-1} \left\Vert f\right\Vert_{\mathcal{H}_{-p}}^n,
\]
where $C_2>0$ is as in Theorem \ref{ineq}.
\end{corollary}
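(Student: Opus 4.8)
The plan is to prove the statement by induction on $n$, applying the V\r{a}ge inequality of Theorem~\ref{ineq} at each step with a \emph{fixed} gap of $2$ between the two indices. The whole point of the argument is that a single copy of $f$ is always kept in the tighter space $\mathcal{H}_{-p}$, while the growing power $f^{n-1}$ is allowed to live in the looser space $\mathcal{H}_{-p-2}$; since $(p+2)-p=2$ is constant, each multiplication costs only one factor of $C_2$, which is exactly what produces $C_2^{n-1}$.

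For the base case $n=1$ I would first record the elementary monotonicity $\|f\|_{\mathcal{H}_{-p-2}}\le\|f\|_{\mathcal{H}_{-p}}$. This follows directly from the definition $\|f\|_{\mathcal{H}_{-p}}=\sum_{\bnu\in\cI}|f_{\bnu}|^2 c_{\bnu}^{-2p}$ together with $c_{\bnu}\ge 1$ for all $\bnu$ (indeed $c_{\mathbf{0}}=1$ and $c_{\bnu}>1$ otherwise, as realized by the explicit sequence $c_{\bnu}=e^{\sum_k\varphi(3^{k-1}\nu_k)}$ with $\varphi(x)=x$ constructed in the proof of Theorem~\ref{ineq}). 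Hence $c_{\bnu}^{-2(p+2)}\le c_{\bnu}^{-2p}$ termwise, giving $\|f\|_{\mathcal{H}_{-p-2}}\le\|f\|_{\mathcal{H}_{-p}}=C_2^{0}\|f\|_{\mathcal{H}_{-p}}$, which is the claim for $n=1$.

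For the inductive step, assume $\|f^{n-1}\|_{\mathcal{H}_{-p-2}}\le C_2^{n-2}\|f\|_{\mathcal{H}_{-p}}^{n-1}$; in particular $f^{n-1}\in\mathcal{H}_{-p-2}$. Writing $f^n=f\cdot f^{n-1}$, I would apply Theorem~\ref{ineq} with $f$ in the role of the element of $\mathcal{H}_{-q}$ for $q=p$ and $f^{n-1}$ in the role of the element of $\mathcal{H}_{-p}$ with that index now equal to $p+2>p$. This yields
\[
\|f^n\|_{\mathcal{H}_{-p-2}}=\|f\,f^{n-1}\|_{\mathcal{H}_{-p-2}}\le C_{(p+2)-p}\,\|f\|_{\mathcal{H}_{-p}}\,\|f^{n-1}\|_{\mathcal{H}_{-p-2}}=C_2\,\|f\|_{\mathcal{H}_{-p}}\,\|f^{n-1}\|_{\mathcal{H}_{-p-2}}.
\]
Substituting the induction hypothesis then gives $\|f^n\|_{\mathcal{H}_{-p-2}}\le C_2\cdot\|f\|_{\mathcal{H}_{-p}}\cdot C_2^{n-2}\|f\|_{\mathcal{H}_{-p}}^{n-1}=C_2^{n-1}\|f\|_{\mathcal{H}_{-p}}^n$, completing the induction.

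I do not expect any serious obstacle here, since the result is essentially a bookkeeping corollary of Theorem~\ref{ineq}. The only point requiring genuine care is the \emph{choice} of how to split the product: one must keep a single copy of $f$ in the sharper norm $\mathcal{H}_{-p}$ at every stage so that the index gap stays equal to $2$ and the constant does not deteriorate. A naive iteration that repeatedly increases the index (bounding $\mathcal{H}_{-p-2}$ by $\mathcal{H}_{-p-4}$, and so on) would produce growing gaps and a product of distinct constants $C_2 C_4 C_6\cdots$ rather than the clean $C_2^{n-1}$; avoiding that is the whole content of the argument.
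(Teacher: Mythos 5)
Your proof is correct and is essentially the paper's own argument: the paper iterates Theorem~\ref{ineq} in exactly the way you describe, keeping one factor of $f$ in $\mathcal{H}_{-p}$ and the power $f^{n-1}$ in $\mathcal{H}_{-p-2}$ so that each step costs a single $C_2$, and it uses the same monotonicity $\Vert f\Vert_{\mathcal{H}_{-p-2}}\leq\Vert f\Vert_{\mathcal{H}_{-p}}$ to close the argument. Your only departure is packaging the iteration as a formal induction, which is a cosmetic difference.
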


\begin{proof} We have for every $f\in\mathcal{H}_{-p}\subseteq\mathcal{H}_{-p-2}$ that  $\left\Vert f\right\Vert_{\mathcal{H}_{-p-2}} \leq \left\Vert f\right\Vert_{\mathcal{H}_{-p}}.$ By Theorem \ref{ineq},
\begin{eqnarray*}
\left\| f^n\right\|_{\mathcal{H}_{-p-2}} & \leq & C_2 \left\| f\right\|_{\mathcal{H}_{-p}} \left\| f^{n-1}\right\|_{\mathcal{H}_{-p-2}} \\
     & \leq & C_2^2 \left\| f\right\|_{\mathcal{H}_{-p}}^2  \left\| f^{n-2} \right\|_{\mathcal{H}_{-p-2}} \\
     & \leq & C_2^{n-1} \left\| f\right\|_{\mathcal{H}_{-p}}^n.
\end{eqnarray*}
\end{proof}

\begin{corollary}
Consider a power series
\begin{equation}
F(\lambda)=\sum_{n\in\mathbb{N}_0} \alpha_n \lambda^n
\label{exp-ps}
\end{equation}
absolutely convergent in the open disk with radius $R$, with $\alpha_n,\lambda\in\mathbb{C}$ and $\mathbb{N}_0=\mathbb{N}\cup\{0\}$.

For $f\in\mathcal{H}_{-p}$ we have that if
\begin{equation}
\left\| f\right\|_{\mathcal{H}_{-p}} < \frac{R}{C_2}
\label{cond}
\end{equation}
\label{ps}
then $F(f)$ converges in $\mathcal{H}_{-p-2}$.
\end{corollary}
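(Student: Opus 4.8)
The plan is to realize $F(f)$ as the limit in $\mathcal{H}_{-p-2}$ of the partial sums $S_N = \sum_{n=0}^{N} \alpha_n f^n$ and to prove that $(S_N)_N$ is a Cauchy sequence in that space; since each $\mathcal{H}_{-p-2}$ is a Banach algebra (cf.\ Proposition~\ref{strong} and the closing remark in the proof of Theorem~\ref{ineq}), completeness will then deliver the limit. First I note that the inclusion $\mathcal{H}_{-p} \subseteq \mathcal{H}_{-p-2}$ guarantees $f \in \mathcal{H}_{-p-2}$, and that each power $f^n$ (with the convention $f^0 = 1$) again belongs to $\mathcal{H}_{-p-2}$ because the product is a law of composition on this algebra.

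The key quantitative input is Corollary~\ref{f-conv}, which gives $\left\Vert f^n\right\Vert_{\mathcal{H}_{-p-2}} \leq C_2^{\,n-1}\left\Vert f\right\Vert_{\mathcal{H}_{-p}}^n$ for every $n\geq 1$. Using the triangle inequality, for $M > N \geq 1$ I would estimate
\[
\left\Vert S_M - S_N\right\Vert_{\mathcal{H}_{-p-2}} \leq \sum_{n=N+1}^{M} |\alpha_n|\,\left\Vert f^n\right\Vert_{\mathcal{H}_{-p-2}} \leq \frac{1}{C_2}\sum_{n=N+1}^{M} |\alpha_n|\,\bigl(C_2\left\Vert f\right\Vert_{\mathcal{H}_{-p}}\bigr)^n .
\]
Writing $\rho := C_2\left\Vert f\right\Vert_{\mathcal{H}_{-p}}$, the hypothesis \eqref{cond} says precisely that $\rho < R$. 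It is exactly the gain of one factor of $C_2$ in Corollary~\ref{f-conv} --- the exponent $n-1$ rather than $n$ --- that produces the prefactor $1/C_2$ and converts the radius of convergence $R$ of $F$ into the admissible threshold $R/C_2$; keeping track of this constant is the only delicate point.

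Since $F$ is absolutely convergent on the open disk of radius $R$ and $\rho < R$, the scalar series $\sum_{n} |\alpha_n|\,\rho^n$ converges, so its tails tend to $0$. Consequently the right-hand side above can be made arbitrarily small for $N$ large, uniformly in $M$, which shows that $(S_N)_N$ is Cauchy in $\mathcal{H}_{-p-2}$. By completeness the limit $F(f) := \lim_{N\to\infty} S_N$ exists in $\mathcal{H}_{-p-2}$, and the very same bound shows that the series is in fact absolutely convergent there. Beyond the bookkeeping of the constant $C_2$ noted above, the argument presents no genuine obstacle.
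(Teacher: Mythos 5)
Your proposal is correct and follows essentially the same route as the paper: both proofs rest on the bound $\Vert f^n\Vert_{\mathcal{H}_{-p-2}} \leq C_2^{\,n-1}\Vert f\Vert_{\mathcal{H}_{-p}}^n$ from Corollary~\ref{f-conv} and comparison with the scalar series $\sum_n |\alpha_n|\rho^n$ at $\rho = C_2\Vert f\Vert_{\mathcal{H}_{-p}} < R$. The only difference is presentational: the paper invokes absolute convergence in $\mathcal{H}_{-p-2}$ directly, while you spell out the equivalent Cauchy-sequence-plus-completeness argument for the partial sums.
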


\begin{proof}
By assumption, if $|\lambda|<R$, power series \eqref{exp-ps} converges absolutely, i.e.,
\[
\sum_{n\in\mathbb{N}_0} \left|\alpha_n \lambda^n\right|=\sum_{n\in\mathbb{N}_0} \left|\alpha_n\right| \left|\lambda^n\right|<\infty.
\]
Applying Corollary \ref{f-conv}, we obtain the absolute convergence of $F(f)$ in the space $\mathcal{H}_{-p-2}$ via
\begin{eqnarray*}
\sum_{n\in\mathbb{N}_0} \left\Vert\alpha_n f^n\right\Vert_{\mathcal{H}_{-p-2}} & = & \sum_{n\in\mathbb{N}_0} |\alpha_n|^2 \left\Vert f^n\right\Vert_{\mathcal{H}_{-p-2}} \\
     & \leq & \alpha_0+C_2^{-1} \sum_{n\in\mathbb{N}} |\alpha_n|^2 \left(C_2 \left\Vert f\right\Vert_{\mathcal{H}_{-p}}\right)^n.
\end{eqnarray*}
Thus, $F(f)$ converges absolutely in $\mathcal{H}_{-p-2}$ if
\[
C_2 \left\Vert f\right\Vert_{\mathcal{H}_{-p}} < R
\]
or, $\left\Vert f\right\Vert_{\mathcal{H}_{-p}} < \frac{R}{C_2}$.
\end{proof}

\begin{corollary}
Suppose $F(\lambda)$ is power series like in the previous corollary (Corollary \ref{ps}). Then, for all $f\in\mathfrak{S}_{-1}$ such that its scalar part $f_{\mathbf{0}}$ satisfies \eqref{cond} we have that $F(f)$ converges in $\mathfrak{S}_{-1}$
\label{body-red}
\end{corollary}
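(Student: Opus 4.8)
The plan is to exploit Proposition~\ref{limit-norm}, which asserts that the $\mathcal{H}_{-p}$-norm of any fixed element collapses, as $p\to\infty$, onto (a function of) its scalar part, namely $\lim_{p\to\infty}\|f\|_{\mathcal{H}_{-p}}=|f_{\mathbf{0}}|^2$. The conceptual point of the corollary is that raising $p$ suppresses the contribution of the soul $f_r$ of $f$, so a growth condition phrased \emph{only} on the scalar part $f_{\mathbf{0}}$ already forces $f$ itself into the regime where Corollary~\ref{ps} applies. Thus the whole argument reduces to locating a single index $p$ at which $f$ satisfies \eqref{cond}, and then invoking the previous corollary.

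Concretely, I would argue as follows. Since $f\in\mathfrak{S}_{-1}=\cup_{p\geq 0}\mathcal{H}_{-p}$, there is some $p_0$ with $f\in\mathcal{H}_{-p_0}$, hence $f\in\mathcal{H}_{-p}$ for every $p\geq p_0$ by the inclusions $\mathcal{H}_{-q}\subseteq\mathcal{H}_{-p}$ for $p\geq q$. The hypothesis that the scalar part satisfies \eqref{cond} reads $|f_{\mathbf{0}}|^2<R/C_2$, since $c_{\mathbf{0}}=1$ gives $\|f_{\mathbf{0}}\|_{\mathcal{H}_{-p}}=|f_{\mathbf{0}}|^2$ for every $p$. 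By Proposition~\ref{limit-norm} the quantities $\|f\|_{\mathcal{H}_{-p}}$ decrease to $|f_{\mathbf{0}}|^2$ as $p\to\infty$, the monotonicity following because $c_{\bnu}^{-2p}$ is non-increasing in $p$ thanks to $c_{\bnu}>1$ for $\bnu\neq\mathbf{0}$. Consequently there exists $p\geq p_0$ with $\|f\|_{\mathcal{H}_{-p}}<R/C_2$, that is, $f$ itself satisfies \eqref{cond} at this level. Applying Corollary~\ref{ps} at this $p$ yields that $F(f)$ converges in $\mathcal{H}_{-p-2}$, and since $\mathcal{H}_{-p-2}\subseteq\mathfrak{S}_{-1}$ this norm convergence transfers to convergence in the inductive-limit topology of $\mathfrak{S}_{-1}$ by Proposition~\ref{convergence}.

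The only point requiring genuine care is the applicability of Proposition~\ref{limit-norm}, which presupposes $c_{\mathbf{0}}=1$ and $c_{\bnu}>1$ for all $\bnu\neq\mathbf{0}$; this is guaranteed by the explicit weight $c_{\bnu}=e^{\sum_k\varphi(3^{k-1}\nu_k)}$ with $\varphi(x)=x$ constructed in the proof of Theorem~\ref{ineq}, for which $c_{\mathbf{0}}=1$ and $c_{\bnu}>1$ whenever some $\nu_k\neq 0$. It is also worth recording that $C_2$ is the universal V\r{a}ge constant of Theorem~\ref{ineq}, independent of the level, so the same threshold $R/C_2$ governs the estimate in every $\mathcal{H}_{-p}$; this is what lets the condition on $f_{\mathbf{0}}$ be compared directly against the norms $\|f\|_{\mathcal{H}_{-p}}$. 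I expect no substantive obstacle beyond this reduction: the analytic heavy lifting has already been carried out in Corollaries~\ref{f-conv} and \ref{ps}, and what remains is precisely the passage from the limiting identity $\lim_{p}\|f\|_{\mathcal{H}_{-p}}=|f_{\mathbf{0}}|^2$ to one admissible index $p$.
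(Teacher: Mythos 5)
Your proposal is correct and follows essentially the same route as the paper's own proof: both use Proposition~\ref{limit-norm} to reduce the hypothesis $|f_{\mathbf{0}}|^2 < R/C_2$ on the scalar part to the existence of a level $p$ at which $\left\Vert f\right\Vert_{\mathcal{H}_{-p}} < R/C_2$, and then invoke Corollary~\ref{ps} at that level. Your write-up is in fact more careful than the paper's terse argument, as it makes explicit the monotonicity of the norms in $p$, the selection of one admissible index, and the passage from convergence in $\mathcal{H}_{-p-2}$ to convergence in $\mathfrak{S}_{-1}$.
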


\begin{proof}
Suppose $f\in\mathfrak{S}_{-1}$, then we know that there is $q_0\in\mathbb{Z}$ with $f\in\mathcal{H}_{-q}$ for each $q\geq q_0$. Using Theorem~\ref{ps} we have that to ensure convergence of $F(f)$ we need $\left\Vert f\right\Vert_{\mathcal{H}_{-q}} < R/C_2$, which in general is not valid. But, due to Proposition \ref{limit-norm}, this condition becomes
\[
\left| f_{\mathbf{0}} \right|^2 < \frac{R}{C_2},
\]
which gives us the statement of the corollary.
\end{proof}

\begin{corollary}
Suppose $f\in\mathfrak{S}_{-1}$. Then, we have that $f$ is invertible if and only if we have for its scalar part $f_{\mathbf{0}}\neq 0$.
\end{corollary}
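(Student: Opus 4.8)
The plan is to imitate the finite-dimensional argument of Lemma~\ref{lem:2.005}, replacing the terminating Neumann series (which relied on nilpotency of the soul) by a genuinely convergent power series in the strong algebra $\mathfrak{S}_{-1}$, supplied by Corollary~\ref{body-red}. The statement is an equivalence, so I would treat the two implications separately, the necessity being elementary and the sufficiency being the substantive half.

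For necessity, suppose $f$ is invertible with inverse $g\in\mathfrak{S}_{-1}$, so $fg=1$. I would read off the scalar part of the product using the multiplication rule \eqref{Eq:3:InfinitProduct}: the coefficient of $\e^{\mathbf{0}}$ in $fg$ collects only the terms with $\bnu+\bmu=\mathbf{0}$, and since every entry of the multi-indices is non-negative this forces $\bnu=\bmu=\mathbf{0}$. As $\sigma(\mathbf{0},\mathbf{0})=1$, this yields $(fg)_{\mathbf{0}}=f_{\mathbf{0}}g_{\mathbf{0}}$. Comparing with $fg=1$ gives $f_{\mathbf{0}}g_{\mathbf{0}}=1$, hence $f_{\mathbf{0}}\neq0$.

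For sufficiency, assume $f_{\mathbf{0}}\neq0$. I would factor $f=f_{\mathbf{0}}(1+g)$ with $g:=f_{\mathbf{0}}^{-1}f_r$, where $f_r:=f-f_{\mathbf{0}}$ is the soul (non-scalar part) of $f$. By construction $g\in\mathfrak{S}_{-1}$ and its scalar part vanishes, $g_{\mathbf{0}}=0$. I would then apply Corollary~\ref{body-red} to the geometric series $F(\lambda)=(1+\lambda)^{-1}=\sum_{n\geq0}(-1)^n\lambda^n$, absolutely convergent on the open unit disk, so $R=1$. Since the convergence hypothesis there reduces, via Proposition~\ref{limit-norm}, to $|g_{\mathbf{0}}|^2<R/C_2$, and here $g_{\mathbf{0}}=0$ while $R/C_2>0$, the condition holds trivially. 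Thus $F(g)=\sum_{n\geq0}(-1)^ng^n$ converges to some $h\in\mathfrak{S}_{-1}$. To confirm that $h$ is a genuine two-sided inverse of $1+g$, I would pass to the limit in the telescoping identity $(1+g)\sum_{n=0}^{N}(-1)^ng^n=1+(-1)^Ng^{N+1}$, using the (separate, indeed joint) continuity of the product established after Proposition~\ref{strong}; convergence of the series forces the general term $g^{N+1}\to0$, so $(1+g)h=1$, and symmetrically $h(1+g)=1$. Consequently $f^{-1}=f_{\mathbf{0}}^{-1}h$ exists in $\mathfrak{S}_{-1}$, and uniqueness follows automatically from associativity.

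The main obstacle is precisely the sufficiency direction: in the infinite-dimensional completion the soul $f_r$ is no longer nilpotent, so the inverse series does not terminate and one must secure honest convergence in the inductive-limit topology rather than a finite algebraic identity. This is exactly what the power-series machinery of Corollary~\ref{body-red} delivers once the element is normalized so that the scalar part of the argument is $0$; the remaining care lies in justifying the interchange of the product with the infinite sum, which rests on the continuity of multiplication and, ultimately, on the V\r{a}ge inequality of Theorem~\ref{ineq}.
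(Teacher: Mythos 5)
Your proof is correct and takes essentially the same route as the paper's: necessity by comparing scalar parts in $fg=1$, and sufficiency by reducing to an element with vanishing scalar part and summing the geometric series via Corollary~\ref{body-red}. The only differences are cosmetic — the paper normalizes $f_{\mathbf{0}}=1$ and expands in $(1-f)$ where you factor $f=f_{\mathbf{0}}(1+g)$ and expand in $g$ — plus your explicit telescoping-and-continuity verification that the sum is a two-sided inverse, a detail the paper leaves implicit.
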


\begin{proof}
Let us assume that $g$ is the inverse of $f$ and its scalar part is denoted by $g_{\mathbf{0}}$, then we have $
fg=1$ implies $f_{\mathbf{0}} g_{\mathbf{0}} = 1$ and $f_{\mathbf{0}}\neq 0$.

To show the opposite direction we suppose $f_{\mathbf{0}}\neq 0$, or with a convenient normalization $f_{\mathbf{0}}=1$.  From Corollary \ref{body-red} we get that
\[
F(f) = \sum_{n\in\mathbb{N}_0} (1-f)^n
\]
converges when the scalar part of $1-f$ is less than $C_2^{-1}$. But, $(1-f)_B=0$ and we get that $g=F(f)\in\mathfrak{S}_{-1}$ and $g$ is the inverse of $f$.
\end{proof}

%%%%%%%%%%%%
%%%%%%%%%%%%
\subsection{Berezin integration}

We now look into a proper definition of path-integration in the sense of Berezin. Berezin integrals are used in superspace theory as linear maps from polynomials in anti-commuting variables to elements of Grassmann algebras.  As we aim to establish stochastic processes on generalised Grassmann algebras  it is necessary to construct a proper path integration on the arising infinite dimensional spaces of anti-commuting random variables. Such a path-integration theory was developed in  \cite{rogers1986}, \cite{MR1172996} %) Alice Rogers84, Witt 84, J. Rabin 86)
as a Fermionic counterpart of the quantum Bosonic case.

In what follows we assume $f : \Omega \rightarrow \overline{\cG}^{(2)}_3,$ where $\Omega =\mathbb{R}$ or $\mathbb{C}$. We begin with the definition of the left multiplication operator  acting on functions with values in $\overline{\cG}^{(2)}_3$. For each $f \in \mathfrak{S}_{-1}$ we define the  left multiplication operator $M_f$ as
\begin{equation} \label{Eq3.Multiplication}
g\in \mathfrak{S}_{-1} \mapsto M_f g := fg \in \mathfrak{S}_{-1}.
\end{equation}
Recall that, since the inductive algebra $\mathfrak{S}_{-1}$ is a strong algebra we have that the multiplication is jointly continuous. Using the basis elements of $\cG_3$ we obtain
$$M_f g = \sum_{\bnu  +\bmu \in \cI } f_{\bnu} g_{\bmu} \sigma(\bnu, \bmu) \e^{\bnu+\bmu}:=  \sum_{\bnu  +\bmu \in \cI } f_{\bnu} g_{\bmu} M_{\bnu} \e^{\bmu}.
$$

Thus, the left multiplication operator $M$ is defined by its action on the basis elements $\e^{\bnu}$ of $\cG_3,$
\begin{equation} \label{Eq:3.M_Operator}
M_{\bnu} \mapsto M_{\bnu} \e^{\bmu} :=  \sigma(\bnu, \bmu) \e^{\bnu+\bmu}, \qquad \# (\bnu+ \bmu)< \infty,
\end{equation}
where $\sigma(\cdot, \cdot)$ is as in (\ref{Eq:2:008c}). We notice that for $f = \sum_{\bnu \in \cI} f_{\bnu} \e^{\bnu}$ we have
$$M_f 1 = \sum_{\bnu \in \cI} f_{\bnu} \sigma(\bnu, \mathbf{0})  \e^{\bnu} = f,$$
as $ \sigma(\bnu, \mathbf{0})=1.$

An obvious problem that arises is that the left multiplication has a non-trivial kernel. To overcome this we use the correspondent $\ell^2-$inner product linked to the $2$-norm of $\overline{\cG}^{(2)}_3,$
\begin{equation}
\inner{c_{\bnu}\e^{\bnu}, c_{\bta}\e^{\bta}}_2 :=   c_{\bnu} \overline c_{\bta} \delta_{\bnu,\bta},\quad c_{\bnu}, c_{\bta} \in \mathbb{C}, \quad \bnu, \bta \in \{ 0,1,2\}^{\BN}.
\end{equation}

Also we observe that $\e^{\mathbf{0}} = 1$ so that $M_{\mathbf{0}} = \Id$ is the identity operator, and $M_{\bnu} = M_{\nu_1} \cdots M_{\nu_d}$ for $\bnu = (\nu_1, \ldots, \nu_d).$

Then,
\begin{gather*}
\inner{ M_{\bnu} \e^{\bmu}, \e^{\bta}}_2 = \sigma(\bnu, \bmu) \inner{\e^{\bnu+\bmu}, \e^{\bta}}_2 = \sigma(\bnu, \bmu)\delta_{\bnu+\bmu, \bta},
\end{gather*} where $\bnu+\bmu \in \{ 0,1,2\}^{\BN}.$

For an arbitrary $\bnu \in \{ 0,1,2\}^{\BN}$ we define the adjoint of $M_{\bnu},$ denoted by  $M^\ast_{\bnu},$ as
\begin{eqnarray*} \label{Eq:3.M_operator}
\inner{M^\ast_{\bnu}  \e^{\bmu}, \e^{\bta}}_2 & := &  \inner{ \e^{\bmu}, M_{\bnu}  \e^{\bta}}_2 \\
& =&  \overline{\sigma(\bnu, \bta) } \inner{ \e^{\bmu}, \e^{\bnu+\bta}}_2  \\
& =&  \overline{\sigma(\bnu, \bta) } \delta_{\bmu, \bnu+\bta}  \\
& =&  \overline{\sigma(\bnu, \bta) } \delta_{\bmu - \bnu, \bta}
\end{eqnarray*} leading to
\begin{equation} \label{Eq:3.M_AdjointOperator}
M_{\bnu}^\ast \mapsto M^\ast_{\bnu}  \e^{\bmu} = \overline{\sigma(\bnu, \bmu - \bnu) } \e^{\bmu - \bnu}, \quad \bmu - \bnu \in \cI,
\end{equation} with again $M^\ast_{\mathbf{0}} = \Id$ and $M^\ast_{\bnu} = M^\ast_{\nu_d} M^\ast_{\nu_{d-1}} \cdots M^\ast_{\nu_1}$ for $\bnu = (\nu_1, \ldots, \nu_d).$  Furthermore, for $ f = \sum_{\bnu \in \cI} f_{\bnu} \e^{\bnu}, g = \sum_{\bmu \in \cI} g_{\bmu} \e^{\bmu}  \in \mathfrak{S}_{-1}$ we have $\inner{M^\ast_{f}  g, \e^{\bta}}_2 =  \inner{ g, M_{f}  \e^{\bta}}_2$ so that the adjoint becomes
$$M^\ast_{f}  g = \sum_{\bmu-\bnu \in \cI} \overline f_{\bnu} g_{\bmu} \overline{\sigma(\bnu, \bmu - \bnu) } \e^{\bmu - \bnu}.$$
Again, we notice that for $f = \sum_{\bnu \in \cI} f_{\bnu} \e^{\bnu}$ we have
$$M^\ast_{f} 1 = \sum_{\mathbf{0}-\bnu \in \cI} \overline f_{\bnu} \overline{\sigma(\bnu, - \bnu) } \e^{\mathbf{0} - \bnu} = \overline f_{\mathbf{0}},$$
as $\mathbf{0}-\bnu \in \cI$ if and only if $\bnu = \mathbf{0}$ and $\sigma(\mathbf{0}, \mathbf{0})=1.$

We finalize the description of the operators $M_{\bnu}  \e^{\bmu}$ and $M^\ast_{\bnu}  \e^{\bmu}$ with a table of the relevante pairs for each $j-$th component of $\bnu, \bmu \in \cI:$
\begin{center}
$\mu_j + \nu_j \in \{0,1,2 \}$  \hspace{1.1cm} $\mu_j - \nu_j \in \{0,1,2 \}$

~

\begin{tabular}{cc||c|c|c|}
    & $\mu_j$ & 0 & 1 & 2 \\
$\nu_j$  & $\diagdown$  &  &  &  \\
\hline
\hline
0  & & 0 & 1 & 2 \\
1  & & 1 & 2 &  \\
2  & & 2 &  &  \\
\hline
\end{tabular}    \hspace{1cm} \begin{tabular}{cc||c|c|c|}
    & $\mu_j$ & 0 & 1 & 2 \\
$\nu_j$  & $\diagdown$  &  &  &  \\
\hline
\hline
0  & & 0 & 1 & 2 \\
1  & &  & 0 & 1 \\
2  & &  &  & 0 \\
\hline
\end{tabular}
\end{center}

Remark that the $M^\ast_{\bnu}$ operator corresponds to a left derivative and it is analogous to the one traditionally defined in superanalysis and supersymmetry \cite{berezin1979method, MR914369, MR1172996, zbMATH00861741}. Hence, the Berezin integral can be defined in terms of $M^\ast_{\bnu}$ as
\begin{equation} \label{Eq:3_BerezinInt}
\int d \e^{\bnu} g := M^\ast_{\bnu} g = \sum_{\bmu-\bnu \in \cI} g_{\bmu} \overline{\sigma(\bnu, \bmu-\bnu)} \e^{\bmu-\bnu},
\end{equation} for $g = \sum_{\bmu \in \cI} g_{\bmu} \e^{\bmu} \in \mathfrak{S}_{-1}, \bnu \in \{ 0,1,2\}^{\BN}$ and where $\# \bnu < \infty.$

We remark that, in the particular case of $g = g_{\bnu} \e^{\bnu}$ then its Berezin integral becomes
$$\int d \e^{\bnu} (g_{\bnu} \e^{\bnu}) = g_{\bnu}  M^\ast_{\bnu} \e^{\bnu} = g_{\bnu} \overline{\sigma(\bnu, \mathbf{0})} \e^{\mathbf{0}} = g_{\bnu} = \inner{M_{g_{\bnu}\e^{\bnu}} 1, \e^{\bnu}}_2.$$

\begin{lemma}
Let $f,g \in \mathcal{H}_{0}.$ %be such that their scalar part is zero, that is, $f_\mathbf{0}=g_\mathbf{0}=0$.
Then it holds:
\begin{equation}
\label{paris123}
\inner{ M_f 1, M_g 1}_2 = \inner{f,g}_2.
\end{equation}
\end{lemma}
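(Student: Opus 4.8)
The plan is to unwind both sides directly using the explicit action of $M_f$ on the identity established earlier in this subsection. Recall that for $f = \sum_{\bnu \in \cI} f_{\bnu} \e^{\bnu}$ we showed $M_f 1 = f$, and likewise $M_g 1 = g$, simply because $\sigma(\bnu, \mathbf{0}) = 1$. Thus the left-hand side $\inner{M_f 1, M_g 1}_2$ is literally $\inner{f, g}_2$, and the claim reduces to an identity that is true by inspection once the multiplication operators are evaluated at $1$.

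Since the statement is essentially immediate from the preceding computation, I would present the proof as a short chain of equalities rather than introducing any new machinery. First I would invoke the identity $M_f 1 = f$ (and $M_g 1 = g$) derived above for the left multiplication operator. Substituting these into the left-hand side gives $\inner{M_f 1, M_g 1}_2 = \inner{f, g}_2$ directly. For completeness, I would also verify this at the level of coefficients by expanding in the basis: writing $f = \sum_{\bnu} f_{\bnu} \e^{\bnu}$ and $g = \sum_{\bmu} g_{\bmu} \e^{\bmu}$, one has $M_f 1 = \sum_{\bnu} f_{\bnu} \e^{\bnu}$ and $M_g 1 = \sum_{\bmu} g_{\bmu} \e^{\bmu}$, so that
\begin{gather*}
\inner{M_f 1, M_g 1}_2 = \sum_{\bnu, \bmu} f_{\bnu} \overline{g_{\bmu}} \inner{\e^{\bnu}, \e^{\bmu}}_2 = \sum_{\bnu} f_{\bnu} \overline{g_{\bnu}} = \inner{f, g}_2,
\end{gather*}
where the middle equality uses the orthogonality $\inner{\e^{\bnu}, \e^{\bmu}}_2 = \delta_{\bnu, \bmu}$ coming from the $\ell^2$-inner product attached to the $2$-norm.

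There is essentially no obstacle here: the lemma is a reformulation of the fact that $M_{\mathbf 0} = \Id$ combined with $\sigma(\bnu, \mathbf 0) = 1$, which together force $M_f 1 = f$. The only point requiring a word of care is that the inner product $\inner{\cdot, \cdot}_2$ is the orthogonal one with $\inner{\e^{\bnu}, \e^{\bmu}}_2 = \delta_{\bnu, \bmu}$, rather than the algebra product or the pseudo-conjugation pairing; since $M_f 1 = f$ expresses the multiplication in the \emph{same} orthonormal basis, no subtlety involving the twisting factors $\sigma(\bnu, \bmu)$ survives, as those all collapse to $1$ when one of the arguments is $\mathbf 0$. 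The identity \eqref{paris123} is therefore best read as a normalization statement confirming that $f \mapsto M_f 1$ is an isometry onto its image with respect to $\inner{\cdot,\cdot}_2$.
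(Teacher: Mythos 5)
Your proof is correct and follows exactly the paper's argument: the paper also deduces the lemma immediately from the identity $M_f 1 = f$ (a consequence of $\sigma(\bnu,\mathbf{0})=1$), which you invoke and then verify at the coefficient level. The extra basis expansion is a harmless elaboration of the same one-line idea.
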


The lemma is immediate since we have
$$M_f  1 = f,$$ as seen above.

%%%%%%%%%%%%%%%
%%%%%%%%%%%%%%%

\section{Stochastic Processes and Their Derivatives}
\label{sec-processes}

A second-order stochastic process indexed by a set $S$ is a map $f_t$ from $S$ into some probability space $\mathbf L^2(\Omega,\mathcal B,P)$, and the covariance of the process is
\begin{equation}
k(t,s)=\int_\Omega  \overline{f_t(w)}f_s(w)dP(w)\stackrel{\rm def.}{=}\mathbb E_P\overline{f_t}f_s,
\end{equation}
where $\mathbb E_P$ denotes the mathematical expectation with respect to $P$. Usually, in topological supersymmetry one is interested not just in the total probability distribution $P$ but also in the generalized probability distribution which consists the differential forms. For the sake of simplicity we are restricting us here to the case of $P$ with the consideration of the generalized probability distribution being done in a similar fashion than the classic case~\cite{Ovch11,Ovch16}.
In order to define stochastic integrals it is of interest to consider cases where the function $s\mapsto f_s$ is differentiable, possibly in a larger space
than the original probability space (a space of stochastic distributions).  Taking Hida's white noise space (see e.g. see \cite{MR1408433,MR1387829}) as probability space,
this space of stochastic distributions, together with an underlying space of stochastic test functions, form a Gel'fand triple. which allows to give useful models for
stochastic processes and their derivatives and in which one can develop stochastic calculus.\smallskip

There is more than one possible such Gel'fand triple. One particularly convenient space of stochastic distributions has been introduced by Yuri Kondratiev, and has a special algebraic structure. It is a strong algebra, as defined above, and in fact provided to the authors of \cite{vage1} the inspiration and framework to define strong algebras.\smallskip

Hida's white noise space
is identified in a natural way with the Fock space associated to $\ell^2(\mathbb N_0,\mathbb C)$, and this motivates the
definition of stochastic processes as functions (or, as multiplication operators by functions) taking valued in the counterpart of the Fock space in various situations.
This approach was also developed in \cite{aal2,aal3}, and in \cite{alpay2020generalized} in the setting of the grey noise space, and in \cite{APS2019} in the theory of non-commutative stochastic
processes and
\cite{MR3231624} in the setting of the Grassmann algebra. The Wick product takes different forms in each of these cases, but they all satisfy V\r{a}ge's inequality in an appropriately defined strong algebra;
this allows to  transfer the results from one setting to the other setting with the same proofs. We now introduce the counterpart of the Fock space in the present
framework.

\begin{definition} \label{defdef}
By analogy with the noncommutative setting, we define the $3-$graded super Fock space as $\overline{\cG}^{(2)}_3$.
\end{definition}

We here explain the corresponding theory in our setting, and first define what is meant by a stochastic process in the present framework.
Let $(\xi_n)_{n\in\mathbb N}$ denote the system of normalized Hermite functions. They form an orthonormal basis of $\mathbf L^2(\mathbb R,dx)$ and every element $f$ in the latter can thus be written as
\begin{equation}
f(u)=\sum_{n=1}^\infty f_n\xi_n(u),\quad {\rm with}\quad\sum_{n=1}^\infty |f_n|^2<\infty.
\end{equation}
We define an isometric map
\[
f\,\,\mapsto\,\, Xf=\sum_{n=1}^\infty f_n\mathbf e_n
\]
from $\mathbf L^2(\mathbb R,dx)$ into $\overline{\cG}^{(2)}_3$, and
\begin{equation}
M_{Xf}=\sum_{n=1}^\infty f_nM_{\mathbf e_n}
\end{equation}

\begin{definition}
A stochastic process indexed by a set $S$ is a map $s\mapsto M_{Xf_s}$, where $f_s\in\mathbf L^2(\mathbb R,dx)$ for every $s\in S$. The covariance function of the process is defined by
\begin{equation}
\langle M_{Xf_s}1, M_{Xf_t}1\rangle=\langle f_s,f_t\rangle_{2}.
\label{cov}
\end{equation}
\end{definition}
In \eqref{cov}, the second inner product is the $\mathbf L^2(\mathbb R,dx)$ inner product, and the equality follows from \eqref{paris123}. This equality plays a key role in
the arguments. Counterparts of this equality hold in particular in the white noise space setting, see \cite{new_sde}, the Poisson noise setting, see \cite[(4.9.4), p. 204]{new_sde},
the grey space noise setting, see \cite{J2015,alpay2020generalized}, and the free setting, see \cite{MR1217253}. In each case, the right and side stays the same, but the left hand side can take quite different forms. \eqref{cov} allows to relate the underlying setting with the Lebesgue space.\smallskip

In the free setting, the counterpart of the left hand side of \eqref{cov} is the trace of a $C^*$-algebra generated by the (real parts) of the creation operators. In the Grassmann setting, operators
are also involved, to make contact with the Berezin integral. Here too, to define counterparts of the Berezin integrals we introduced earlier multiplication operators, which can be seen as the analogs
of the creation operators. For the discussion of stochastic processes themselves, we will not consider operators, but directly functions.\smallskip

We are interested in two special cases, namely $S=\mathbb R$ (or a subinterval of it), and the real valued Schwartz functions, here denoted by $\mathcal S(\mathbb R)$.
In the first we consider covariance functions in \eqref{cov} of the form
\begin{equation}
K_\sigma(t,s) = \int_\mathbb{R} \frac{(e^{iut}-1)(e^{-ius}-1)}{u^2} d\sigma(u),
\label{kernel}
\end{equation}
where $\sigma$ represents an increasing function such that the Stieltjes integral
\begin{equation}
\label{qwertyu}
\int_\mathbb{R}\frac{d\sigma(u)}{u^2+1}<\infty.
\end{equation}
Such a family contains in particular the Brownian and the fractional Brownian motion.\smallskip

Let us introduce an operator $S_m$ in $\mathbf{L}^2(\mathbb{R})$ defined by
\begin{equation}
\widehat{S_mf}(u) = \sqrt{m(u)}\widehat{f}(u),
\label{Sm-def}
\end{equation}
with $\widehat{f}$ denoting the Fourier transform of $f$. Keep in mind that in general $S_m$ is an unbounded operator. The domain of $S_m$ is given by
\[
\text{dom}\,S_m = \left\{f\in\mathbf{L}^2(\mathbb{R})\suchthat \int_\mathbb{R} m(u) |\widehat{f}(u)|^2 du<\infty\right\},
\]
which contains $\mathbf{1}_{[0,t]}$. We can now consider the action of the operator $S_m$ on the function $\mathbf{1}_{[0,t]}$, i.e.
\[
f_m(t) = S_m\mathbf{1}_{[0,t]}
\]
and via an application of Plancherel's identity we get
\begin{eqnarray*}
\left\langle f_m(t),f_m(s)\right\rangle_{\mathbf{L}_2(\mathbb{R})} & = & \frac{1}{2\pi} \left\langle \widehat{f}_m(t),\widehat{f}_m(s)\right\rangle_{\mathbf{L}_2(\mathbb{R})} \\
     & = & \frac{1}{2\pi} \left\langle \sqrt{m(u)}\widehat{\mathbf{1}}_{[0,t]},\sqrt{m(u)}\widehat{\mathbf{1}}_{[0,s]}\right\rangle_{\mathbf{L}_2(\mathbb{R})} \\
     & = & \frac{1}{2\pi} \left\langle m(u)\frac{e^{-iut}-1}{u},\frac{e^{-ius}-1}{u}\right\rangle_{\mathbf{L}_2(\mathbb{R})} \\
     & = & \frac{1}{2\pi} \int_\mathbb{R} \frac{(e^{iut}-1)(e^{-ius}-1)}{u^2} m(u) du.
\end{eqnarray*}

The $\overline{\mathcal G}^{(2)}_3$-valued process
\[
  XS_m\mathbf{1}_{[0,t]}=\sum_{n\in\mathbb{N}} \left(\int_0^t (S_m\xi_n)(u) du\right){\mathbf e_n}
\]
has covariance function equal to
\[
\frac{1}{2\pi} \int_\mathbb{R} \frac{(e^{iut}-1)(e^{-ius}-1)}{u^2} m(u) du.
\]

\begin{theorem}
\label{thm55}
Let $m$ be a positive measurable function, satisfying \eqref{m-form}
\begin{equation}
m(u) \leq \left\{
\begin{array}{l l}
K |u|^{-b} & |u|\leq 1, \\
K |u|^{2N} & |u|> 1,
\end{array}
\right.
\label{m-form}
\end{equation}
with $b<2$, $N\in\mathbb{N}_0$, and $K$ represents a positive real constant.
and \eqref{qwertyu} (the latter for $d\sigma(t)=m(t)dt$).
Then, $TS_m1_{0,t]}$is differentiable in $\mathfrak{S}_{-1}$, with continuous derivative there.
\end{theorem}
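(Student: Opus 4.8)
The plan is to exhibit the derivative explicitly and then control everything inside a single Hilbert space $\mathcal{H}_{-1}$, using that differentiability and continuity in $\mathfrak{S}_{-1}=\cup_{p\geq 0}\mathcal{H}_{-p}$ reduce, by Proposition~\ref{convergence}, to the corresponding statements in one of the $\mathcal{H}_{-p}$. Writing $F(t):=XS_m\mathbf 1_{[0,t]}=\sum_{n}c_n(t)\,\e_n$ with $c_n(t)=\int_0^t (S_m\xi_n)(u)\,du$, the natural candidate for the derivative is $G(t):=\sum_{n}(S_m\xi_n)(t)\,\e_n$. Recalling that for the weight sequence fixed in the proof of Theorem~\ref{ineq} one has $c_{\e_n}=e^{3^{n-1}}$, hence $c_{\e_n}^{-2}=e^{-2\cdot 3^{n-1}}$, the space $\mathcal{H}_{-1}$ is so large that any bound on $(S_m\xi_n)(t)$ that is polynomial in $n$ and uniform in $t$ on compact sets will make all the relevant series converge; in particular both $F(t)$ and $G(t)$ will then lie in $\mathcal{H}_{-1}\subset\mathfrak{S}_{-1}$.

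Establishing such a bound is the crux. Since each $\xi_n$ is an eigenfunction of the Fourier transform, $\widehat{\xi}_n$ is a unimodular multiple of $\xi_n$, so $\widehat{S_m\xi_n}=\sqrt{m}\,\widehat{\xi}_n$ is a constant multiple of $\sqrt{m}\,\xi_n$, and $(S_m\xi_n)$ is, up to a constant, the inverse Fourier transform of $\sqrt{m}\,\xi_n$. The hypothesis $b<2$ is exactly what guarantees $\sqrt{m}\,\xi_n\in L^1(\mathbb{R})$: near the origin $\sqrt{m(u)}\leq \sqrt{K}\,|u|^{-b/2}$ is integrable because $b/2<1$, and for $|u|>1$ the factor $\sqrt{m(u)}\leq\sqrt{K}\,|u|^{N}$ is dominated by the Gaussian decay of $\xi_n$. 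Consequently each $S_m\xi_n$ is bounded and continuous with $\|S_m\xi_n\|_\infty\leq C\,\|\sqrt{m}\,\xi_n\|_{L^1(\mathbb{R})}$. Splitting the $L^1$-norm at $|u|=1$, the inner part is $O(1)$ uniformly in $n$ (as $\|\xi_n\|_\infty$ is bounded), while for the outer part Cauchy--Schwarz gives $\int_{|u|>1}|u|^{N}|\xi_n|\,du\leq \sqrt{2}\,\||u|^{N+1}\xi_n\|_{L^2(\mathbb{R})}$, and the creation/annihilation recursion $u\xi_n=\sqrt{(n+1)/2}\,\xi_{n+1}+\sqrt{n/2}\,\xi_{n-1}$ iterated $N+1$ times yields $\||u|^{N+1}\xi_n\|_{L^2(\mathbb{R})}=O(n^{(N+1)/2})$. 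Hence $\|S_m\xi_n\|_\infty\leq C\,(1+n^{(N+1)/2})$, uniformly in $t$; this is the step I expect to be the main obstacle.

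With this estimate in hand the remaining two points are applications of dominated convergence in the weighted $\ell^2$-sum defining $\|\cdot\|_{\mathcal{H}_{-1}}$. For differentiability, the $n$-th coefficient of $h^{-1}\big(F(t+h)-F(t)\big)-G(t)$ equals $h^{-1}\int_t^{t+h}(S_m\xi_n)(u)\,du-(S_m\xi_n)(t)$, which tends to $0$ as $h\to0$ by continuity of $S_m\xi_n$ and is bounded in modulus by $2\|S_m\xi_n\|_\infty\leq 2C(1+n^{(N+1)/2})$. Since $\sum_{n}C^2(1+n^{(N+1)/2})^2e^{-2\cdot 3^{n-1}}<\infty$, dominated convergence forces $\big\|h^{-1}(F(t+h)-F(t))-G(t)\big\|_{\mathcal{H}_{-1}}\to0$, so $F'(t)=G(t)$ in $\mathcal{H}_{-1}$. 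For continuity of the derivative, $\|G(t)-G(s)\|_{\mathcal{H}_{-1}}^2=\sum_{n}|(S_m\xi_n)(t)-(S_m\xi_n)(s)|^2 e^{-2\cdot 3^{n-1}}$, where each summand tends to $0$ as $s\to t$ by continuity of $S_m\xi_n$ and is dominated by the summable sequence $4C^2(1+n^{(N+1)/2})^2 e^{-2\cdot 3^{n-1}}$; dominated convergence then gives $\|G(t)-G(s)\|_{\mathcal{H}_{-1}}\to0$. Finally, I would note that condition \eqref{qwertyu} serves only to ensure that $F$ itself carries the finite covariance \eqref{kernel}, whereas the pointwise growth bound \eqref{m-form}, through $b<2$ and $N\in\mathbb{N}_0$, is what drives the entire differentiability argument.
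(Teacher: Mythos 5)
Your proof is correct, but there is nothing in the paper to compare it against: Theorem~\ref{thm55} is stated there without any proof at all (and with typographical damage --- ``$TS_m1_{0,t]}$'' for $XS_m\mathbf{1}_{[0,t]}$). The paper's implicit justification is the remark in Section~\ref{sec-processes} that, once a V\r{a}ge inequality holds in the strong algebra $\mathfrak{S}_{-1}$, results of this kind ``transfer from one setting to the other with the same proofs'', the reference settings being the Kondratiev/white-noise and Grassmann cases. Your argument is, in substance, exactly that transferred proof: identify the coefficient functions $c_n(t)=\int_0^t(S_m\xi_n)(u)\,du$, propose $G(t)=\sum_n(S_m\xi_n)(t)\,\e_n$ as the derivative, prove a sup-norm bound on $S_m\xi_n$ polynomial in $n$ (via $\widehat{\xi}_n$ proportional to $\xi_n$, integrability of $\sqrt{m}\,\xi_n$ from $b<2$, Cram\'er's uniform bound on $\|\xi_n\|_\infty$, and the recursion $u\xi_n=\sqrt{(n+1)/2}\,\xi_{n+1}+\sqrt{n/2}\,\xi_{n-1}$), and then conclude by dominated convergence in a single Hilbert space $\mathcal{H}_{-1}$, which suffices because the embedding $\mathcal{H}_{-1}\hookrightarrow\mathfrak{S}_{-1}$ is continuous (Proposition~\ref{convergence} is more than you need). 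Two harmless inaccuracies to flag: first, $\widehat{\xi}_n$ is a \emph{constant} multiple of $\xi_n$ whose modulus depends on the Fourier normalization (not unimodular in the convention of \eqref{Sm-def}), but this only renames $C$; second, your quantitative bookkeeping ($c_{\e_n}^{-2}=e^{-2\cdot 3^{n-1}}$) is tied to the particular weight sequence $c_{\bnu}=e^{\sum_k 3^{k-1}\nu_k}$ constructed in the proof of Theorem~\ref{ineq}; since the paper fixes precisely that sequence this is legitimate, and in fact its double-exponential decay means even a much cruder bound than your $O\bigl(n^{(N+1)/2}\bigr)$ estimate would close the argument. Your closing observation on the division of labour between \eqref{m-form} (which drives differentiability) and \eqref{qwertyu} (which only guarantees finiteness of the covariance \eqref{kernel}) is also accurate. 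In short: the proposal is a complete proof of a statement the paper leaves unproved, and it follows the approach the paper's cited literature intends.
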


\begin{theorem}
Let $s\mapsto f_s$ be a $\overline{\cG}^{(2)}_3$-valued function such that the derivative $s\mapsto f^\prime_s$ is continuous from $[0,1]$ into $\mathfrak S_{-1}$,
  and let
$s\mapsto Y(s)$ be a continuous function from $[0,1]$ into $\mathfrak S_{-1}$.  There is a $p\in\mathbb N$ such that the function $t\mapsto Y(t)f^\prime(t)$ is continuous
in $\mathcal H_{-p}$ and the corresponding Hilbert space integral $\int_0^1Y(t)f_s^\prime $ converges in $\mathfrak S_{-1}$.
\end{theorem}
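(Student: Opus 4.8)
The plan is to reduce everything from the inductive-limit space $\mathfrak{S}_{-1}$ to a single Hilbert space $\mathcal{H}_{-p}$, where the product is controlled by V\r{a}ge's inequality (Theorem \ref{ineq}) and where the integral of a continuous function on $[0,1]$ exists by the standard Banach-space theory, and then to push the result back up via the continuous inclusion $\mathcal{H}_{-p}\subseteq\mathfrak{S}_{-1}$. First I would exploit the compactness of $[0,1]$: since $s\mapsto Y(s)$ and $s\mapsto f'(s)$ are continuous into $\mathfrak{S}_{-1}=\cup_{p\geq 0}\mathcal{H}_{-p}$, their images are compact subsets of $\mathfrak{S}_{-1}$. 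By Proposition \ref{compact} each such image is contained in, and compact in the norm of, one of the spaces $\mathcal{H}_{-p}$. On that compact image the $\mathcal{H}_{-p}$-norm topology and the topology inherited from $\mathfrak{S}_{-1}$ coincide, because a continuous bijection from a compact space onto a Hausdorff space is a homeomorphism, so both maps are in fact continuous into a common Hilbert space $\mathcal{H}_{-q}$, after passing to the larger of the two indices via an inclusion $\mathcal{H}_{-q_1}\subseteq\mathcal{H}_{-q}$. In particular the suprema $\sup_{t\in[0,1]}\|Y(t)\|_{\mathcal{H}_{-q}}$ and $\sup_{t\in[0,1]}\|f'(t)\|_{\mathcal{H}_{-q}}$ are finite.

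Next I would fix any $p>q$ and estimate the product directly in $\mathcal{H}_{-p}$. For the continuity of $t\mapsto Y(t)f'(t)$ in $\mathcal{H}_{-p}$ I would split the increment as
\begin{equation*}
Y(t)f'(t)-Y(s)f'(s)=Y(t)\bigl(f'(t)-f'(s)\bigr)+\bigl(Y(t)-Y(s)\bigr)f'(s),
\end{equation*}
and bound each summand with the two inequalities of Theorem \ref{ineq}, using $Y(t),\,Y(t)-Y(s)\in\mathcal{H}_{-q}$ as the first factor and $f'(t)-f'(s),\,f'(s)\in\mathcal{H}_{-q}\subseteq\mathcal{H}_{-p}$ as the second. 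This yields
\begin{equation*}
\|Y(t)f'(t)-Y(s)f'(s)\|_{\mathcal{H}_{-p}}\leq C_{p-q}\Bigl(\|Y(t)\|_{\mathcal{H}_{-q}}\|f'(t)-f'(s)\|_{\mathcal{H}_{-p}}+\|Y(t)-Y(s)\|_{\mathcal{H}_{-q}}\|f'(s)\|_{\mathcal{H}_{-p}}\Bigr).
\end{equation*}
Using $\|\cdot\|_{\mathcal{H}_{-p}}\leq\|\cdot\|_{\mathcal{H}_{-q}}$ (the norms decrease as the index grows, exactly as in the proof of Corollary \ref{f-conv}) together with the uniform bounds from the first paragraph and the continuity of $Y$ and $f'$ into $\mathcal{H}_{-q}$, the right-hand side tends to $0$ as $s\to t$. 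Hence $t\mapsto Y(t)f'(t)$ is continuous from $[0,1]$ into the Hilbert space $\mathcal{H}_{-p}$.

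Finally, a continuous function on the compact interval $[0,1]$ with values in the Hilbert (hence Banach) space $\mathcal{H}_{-p}$ is Riemann integrable, so $\int_0^1 Y(t)f'(t)\,dt$ exists as a limit of Riemann sums in $\mathcal{H}_{-p}$; since $\mathcal{H}_{-p}$ embeds continuously into $\mathfrak{S}_{-1}$, the same limit defines the integral in $\mathfrak{S}_{-1}$, which gives the asserted convergence.

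I expect the only genuinely delicate point to be the reduction in the first paragraph: that continuity into the inductive limit $\mathfrak{S}_{-1}$ forces the image to live, \emph{continuously}, in one fixed Hilbert space $\mathcal{H}_{-q}$. This is precisely where the perfect and nuclear structure of the triple and Propositions \ref{compact}--\ref{convergence} are essential, and where one must verify that the subspace topology inherited from $\mathfrak{S}_{-1}$ agrees with the Hilbert norm topology on the relevant compact set. Once this reduction is in place, the remaining steps are routine applications of V\r{a}ge's inequality and of standard vector-valued integration.
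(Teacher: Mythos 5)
The paper states this theorem without any proof --- it is asserted and the text immediately moves on to the case $S=\mathcal S(\mathbb R)$ --- so there is no proof of record to compare yours against; judged on its own merits, your argument is correct and complete. The genuinely delicate step is the one you identify: passing from continuity into the inductive limit $\mathfrak{S}_{-1}$ to continuity into a single Hilbert space. Your chain of reasoning there holds up: the images $Y([0,1])$ and $f'([0,1])$ are compact in $\mathfrak{S}_{-1}$; Proposition \ref{compact} (applicable because the paper establishes that $\mathfrak{S}_{-1}$ is the dual of a perfect space) places each image inside some $\mathcal{H}_{-q_i}$, compactly in its norm; and since the inclusion $\mathcal{H}_{-q_i}\hookrightarrow\mathfrak{S}_{-1}$ is continuous and $\mathfrak{S}_{-1}$ is Hausdorff, the identity on the image is a continuous bijection from a compact space onto a Hausdorff one, hence a homeomorphism, so the norm topology and the subspace topology agree there and both maps are norm-continuous into $\mathcal{H}_{-q}$, $q=\max(q_1,q_2)$. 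After that reduction, everything is routine, exactly as you say: the splitting of the increment, the V\r{a}ge inequality of Theorem \ref{ineq} (note that for your particular splitting only the first inequality in \eqref{vage_ineq} is actually needed, since in both summands the $\mathcal{H}_{-q}$ factor sits on the left; the second inequality is what protects you against noncommutativity if one splits the other way), the norm monotonicity $\Vert\cdot\Vert_{\mathcal{H}_{-p}}\leq\Vert\cdot\Vert_{\mathcal{H}_{-q}}$ for $p\geq q$, Riemann integration of a continuous $\mathcal{H}_{-p}$-valued function on $[0,1]$, and the continuous embedding $\mathcal{H}_{-p}\subseteq\mathfrak{S}_{-1}$ to transfer the integral. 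This is also the mechanism used for the analogous statements in the strong-algebra literature the paper builds on, so your proof is a faithful reconstruction of the intended argument.
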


In the case of $S=\mathcal S(\mathbb R)$, we consider a continuous positive operator $A$ from $\mathcal S(\mathcal R)$ into $\mathcal S^\prime(\mathbb R)$. On the one hand, applying the Bochner-Minlos theorem to the function $\exp(-\langle As,s\rangle)$  where the brackets denote the duality between $\mathcal S(\mathbb R)$ and
$\mathcal S^\prime(\mathbb R)$. We obtain a probability measure $P_A$ on $\mathcal S^\prime(\mathbb R)$ such that
\begin{equation}
\mathbb E_{P_A} e^{-i\langle \cdot, s\rangle}=e^{-\langle As,s\rangle},
\end{equation}
and a centred Gaussian process indexed by $\mathcal S(\mathbb R)$, defined by
\begin{equation}
Q_s(\ww)=\langle \ww, s\rangle,
\end{equation} with covariance function
\begin{equation}
\label{wa123}
\mathbb E_{P_A}(Q_{s_1}Q_{s_2})=\langle As_1,s_2\rangle.
\end{equation}
The operator $A$ can be factored via $\mathbf L^2(\mathbb R,dx)$ as $A=T^*T$, where $T$ is a continuous operator from
$\mathcal S(\mathbb R)$ into $\mathbf L^2(\mathbb R,dx)$ as $A=T^*T$, and therefore \eqref{wa123} can be rewritten as:
\begin{equation}
\mathbb E_{P_A}(Q_{s_1}Q_{s_2})=\langle Ts_1,Ts_2\rangle_2,
\end{equation}
where the latter brackets denote the inner product in $\mathbf L^2(\mathbb R,dx)$.

\newpage

\section*{Acknowledgement}
D. Alpay thanks the Foster G. and Mary McGaw Professorship in Mathematical Sciences, which supported this research. P. Cerejeiras and U. K\"ahler were supported by Portuguese funds through the CIDMA - Center for Research and Development in Mathematics and Applications, and the Portuguese Foundation for Science and Technology (``FCT--Funda\c{c}\~ao para a Ci\^encia e a Tecnologia''), within project UIDB/04106/2020 and UIDP/04106/2020. P. Cerejeiras was supported by the FCT  Sabbatical grant ref. SFRH/BSAB/143104/2018.

\bibliographystyle{plain}
\bibliography{all1}

\end{document}